    \def\qed{\hfill$\sqcap\kern-8.0pt\hbox{$\sqcup$}$\\}
    \def\numx#1e#2{{#1}\mathrm{e}{#2}}
    \def\re{\textnormal {Re}}
    \def\im{\textnormal {Im}}
    \def\p{{\mathbb P}}
    \def\r{{\mathbb R}}
    \def\c{{\mathbb C}}
    \def\e{{\mathbb E}}
    \def\d{{\textnormal d}}
    \def\i{{\textnormal i}}
	\newtheorem{proposition}{Proposition}
	\newtheorem{remark}{Remark}
\title{Approximating functions on $\r^+$ by exponential sums}
\author{ 
{Alexey Kuznetsov,  Armin Mohammadioroojeh\footnote{Dept. of Mathematics and Statistics,  York University,
4700 Keele Street, Toronto, ON, M3J 1P3, Canada.  \\
 Email: akuznets@yorku.ca}} }
\date{\today}
\begin{document}
\maketitle

\begin{abstract}
We present a new method for approximating real-valued functions on $\r^+$ by linear combinations of exponential functions with complex coefficients. The approach is based on a multi-point Padé approximation of the Laplace transform and employs a highly efficient continued fraction technique to construct the corresponding rational approximant. We demonstrate the accuracy of this method through a variety of examples, including the Gaussian function, probability density functions of the lognormal and Gompertz–Makeham distributions, the hockey stick and unit step functions, as well as a function arising in the approximation of the gamma and Barnes $G$-functions.     
\end{abstract}

{\vskip 0.15cm}
 \noindent {\it Keywords}: exponential sum, Pad\'e approximation, Laplace transform, continued fraction
{\vskip 0.25cm}
 \noindent {\it 2020 Mathematics Subject Classification }: Primary 41A30,  Secondary 65D15, 41A21
\section{Introduction}

Let $f$ be a real-valued function defined on an interval $I \subseteq [0,\infty)$. The problem of approximating $f$ by exponential sums of the form
\begin{equation}\label{def:phi}
\phi(x)=\sum_{j=1}^M c_j e^{-\lambda_j x}, \qquad c_j, \lambda_j \in \c,
\end{equation}
is important and has a long history. 
Exponential sum approximations are useful because each term $e^{-\lambda_j x}$ is easy to integrate, convolve, separate across variables, and apply in operator form; this is why such approximations play an important role in fast algorithms and separated representations of kernels and Green’s functions \cite{Beylkin_2005,Beylkin_2010,Hackbusch2019,Jiang2022,Kuznetsov_2022}. The earliest systematic method is due to Gaspard de Prony, dating back to the late eighteenth century. Prony’s method \cite{Fernandez_2018,PlonkaTasche2014} samples the function $f$ on a regular grid of points in the interval $I$ and transforms the problem into one involving a linear difference equation with constant coefficients. Prony’s method is general -- it applies to a wide variety of functions -- and is numerically stable \cite{Katz_2024}. Numerous improvements and variants have since been developed; see \cite{Osborne_1995,Stampfer2020} and the references therein.
In the context of estimating the parameters of a signal represented as a finite sum of exponentials or damped sinusoids, other classical methods include Matrix Pencil \cite{Hua_1990}, MUSIC \cite{Schmidt1986}, and ESPRIT \cite{RoyKailath1989}; see also \cite{DereviankoPlonkaPetz2023,Potts_2013,Stoica_2005}. 

Extensive work has also been devoted to the study of optimal exponential sum approximations in $L_p$ spaces and to the construction of such approximations for specific functions. In this context, a more general class of approximants is often considered, in which the coefficients $c_j$ in \eqref{def:phi} may be polynomials in $x$. Rice \cite{Rice_1962} studied Chebyshev (i.e., $L_{\infty}(I,\d x)$) approximations on a finite interval. Kammler \cite{Kammler_1973, Kammler_1979} characterized optimal $L_p([0,1],\d x)$ approximations and investigated best $L_2((0,\infty),\d x)$ approximations of completely monotone functions. Approximations of power functions $x^{-\beta}$ were studied in \cite{Braess_2009,McLean_2018,Hackbusch_2019}. Further related contributions include the trigonometric approximation of Bessel functions \cite{Cuyt_2020} and the approximation of the Gaussian function by short cosine sums \cite{Derevianko_2023}.

Another widely applicable approach to constructing exponential sum approximations was introduced in 2005 by Beylkin and Monz\'on \cite{Beylkin_2005,Beylkin_2010}. Their method also samples $f$ on a regular grid, but then constructs a Hankel matrix from these values and applies the singular value decomposition. This method has become very popular and has been applied in a wide range of contexts \cite{Feng_2017,Iscoe_2010,McLean_2018}.

In this paper we present a new algorithm for approximating functions $f : [0,\infty) \to \r$ by exponential sums of the form \eqref{def:phi}. Our method is based on a multi-point Padé approximation of the Laplace transform of $f$, and can be viewed as a generalization of the technique introduced in \cite{Kuznetsov_2022}. See 
\cite{DereviankoPlonkaPetz2023} for another approach that uses rational approximations to construct exponential sum approximations. Let $F(z)$ and $R(z)$ denote the Laplace transforms of $f$ and $\phi$, respectively:
\begin{equation}\label{def:F_Laplace_transform}
F(z):=\int_0^{\infty} f(x) e^{-zx} \d x, \qquad
R(z):=\int_0^{\infty} \phi(x) e^{-z x} \d x.
\end{equation}
We assume that $f \in L_1((0,\infty), \d x)$, so that $F$ is well defined in the right half-plane $\re(z) \ge 0$.
If $\phi$ has the form \eqref{def:phi}, then $R(z)$ is a rational function with simple poles at $-\lambda_j$, and the coefficients $c_j$ are obtained from its partial fraction decomposition:
\begin{equation}\label{eqn:Phi_partial_fraction}
R(z)=\sum_{j=1}^M \frac{c_j}{z+\lambda_j}.
\end{equation}

The key observation underlying our approach is that if the exponential sum $\phi$ is close  to $f$ in an appropriate norm, then the rational function $R$ is close  to the Laplace transform $F$, and vice versa. For example, if $\Vert f - \phi \Vert_p$ is small for some $p>1$, then by Hölder’s inequality we have
\begin{align*}
|F(z)-R(z)| &= \Big \vert \int_0^{\infty} (f(x)-\phi(x)) e^{-z x}   \d x \Big \vert \\
&\le \Vert f-\phi \Vert_p \times \Bigg(\int_0^{\infty} e^{-q \re(z) x}  \d x \Bigg)^{1/q}
= \frac{\Vert f-\phi \Vert_p}{(q \re(z))^{1/q}},
\end{align*}
for all $z$ in the right half-plane $\re(z) > 0$, where $q$ is defined by $1/p+1/q=1$. When $p=1$, the same bound holds with the denominator replaced by $1$. Thus, a good exponential sum approximation to $f$ in the $L_p$ norm produces a good rational approximation to $F$ in the $L_{\infty}$ norm. Conversely, writing $f$ and $\phi$ via the inverse Laplace transform yields, for any $c>0$,
$$
|f(x) - \phi(x)| \le \frac{e^{c x}}{2 \pi} \int_{\r} |F(c+\i y) - R(c+\i y)| \, \d y,
$$
so if $F$ is close to $R$ in the $L_1$ norm on some vertical line $c+ \i \r$, then $e^{-c x}(f(x)-\phi(x))$ is small in the $L_{\infty}$ norm. This shows the close connection between rational approximation of Laplace transforms and exponential sum approximation of the original function.

Having established this connection, let us examine what information about the original function $f$ can be readily transferred to its Laplace transform. Suppose that $f$ satisfies
\begin{equation}\label{eqn:f_at_zero}
f(x)=\sum_{j=0}^{n_{\infty}-1} \xi_j\frac{x^j}{j!}+O(x^{n_{\infty}}), \qquad x\to 0^+.
\end{equation}
A sufficient condition for this expansion is the boundedness of the derivative $f^{(n_{\infty})}(x)$ on some interval $(0,c)$; in that case we necessarily have $\xi_j=f^{(j)}(0+)$. Applying Watson's Lemma \cite{Wong_1972}, we obtain
\begin{equation}\label{eqn:F_at_infinity}
F(z) = \sum_{j=0}^{n_{\infty}-1} \xi_j z^{-j-1}
+ O(z^{-n_{\infty}-1}), \qquad z\to \infty,
\end{equation}
uniformly in any sector 
\[
S_{\theta}:=\{ z\in \c \, : \, |\arg(z)|\le \theta\}, \qquad \theta \in (0,\pi/2).
\]
We now require that the approximating rational function $R$ satisfies the same asymptotic condition at $z=\infty$. By Watson's Lemma, this is equivalent to $f(x)-\phi(x)=O(x^{n_{\infty}})$ as $x\to 0^+$. Imposing this condition ensures that $R(z)$ is close to $F(z)$ for large values of $z$.  

To guarantee closeness for moderate and small values of $z$, we impose additional constraints by requiring that $R$ interpolates $F$ at selected points $z_j$ lying in the right half-plane $\re(z) \ge 0$. The general outline of our method is therefore as follows: we aim to construct a rational function $R$ satisfying the following conditions.  

\vspace{0.2cm}

\noindent
{\bf Interpolation conditions at finite points:}
\begin{equation}\label{eqn:Phi_at_z_j}
R(z_j)=F(z_j), \qquad j=1,2,\dots,p, 
\end{equation}
where the points $\{z_j\}_{1\le j \le p}$ lie in the half-plane $\re(z) \ge 0$ and satisfy the symmetry condition $z_{p+1-j}=\overline z_j$ for all $j=1,2,\dots,p$.

\vspace{0.2cm}

\noindent
{\bf Expansion at infinity:}
as $z\to \infty$,
\begin{equation}\label{eqn:Phi_at_infty}
R(z)=\sum_{j=0}^{n_{\infty}-1} \xi_{j}\, z^{-j-1}+O(z^{-n_{\infty}-1}),
\end{equation}
where the coefficients $\xi_j$ are taken from \eqref{eqn:f_at_zero}.  

\vspace{0.2cm}

The above setup defines a multi-point Pad\'e approximation problem with $p+1$ points, namely $\{z_1,z_2,\dots,z_p,\infty\}$. At each finite interpolation point we match only the value of the rational
 function $R$, while at infinity we match the first $n_{\infty}$ coefficients of the power series expansion of $R$ in $z^{-1}$. This algorithm therefore requires knowledge of the first several coefficients of the Taylor expansion of $f$ at zero. These are typically easy to obtain, since in most applications $f$ is given by an explicit formula and its derivatives can be computed directly. The algorithm also requires the values of the Laplace transform of $f$ at the interpolation points $z_j$. When a closed-form expression for $F$ is unavailable, these values must be computed numerically (see Section~\ref{section_Numerical_results} for details).  

A rational function $R(z)$ of the form \eqref{eqn:Phi_partial_fraction} is determined by $2M$ complex parameters $\{c_j, \lambda_j\}_{1\le j \le M}$. The conditions \eqref{eqn:Phi_at_z_j} and \eqref{eqn:Phi_at_infty} impose $p+n_{\infty}$ equations on $R$: $p$ equations from interpolation at $\{z_j\}_{1\le j \le p}$ and $n_{\infty}$ equations ensuring that the coefficients in the expansion of $R$ in powers of $z^{-1}$ coincide with the prescribed $\xi_j$. Thus, we obtain $p+n_{\infty}$ constraints for $2M$ unknowns. It is therefore natural to expect that when $p+n_{\infty}=2M$, there exists at most one rational function $R$ satisfying these conditions. The following proposition confirms this expectation.  

\begin{proposition}\label{prop_unique}
If $p+n_{\infty}=2M$, then there exists at most one rational function $R$ of the form \eqref{eqn:Phi_partial_fraction} satisfying the conditions \eqref{eqn:Phi_at_z_j} and \eqref{eqn:Phi_at_infty}. Moreover, if such a rational function $R$ exists, it is necessarily real.
\end{proposition}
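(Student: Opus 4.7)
The plan is to prove uniqueness by a counting argument on polynomial degrees and zeros, and then deduce reality by applying uniqueness to the conjugate of $R$.

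For uniqueness, suppose $R_1$ and $R_2$ are two rational functions of the form \eqref{eqn:Phi_partial_fraction} satisfying \eqref{eqn:Phi_at_z_j} and \eqref{eqn:Phi_at_infty}. I would write their difference as $D(z):=R_1(z)-R_2(z)=P(z)/Q(z)$, where $Q$ is the (monic) common-denominator polynomial built from the poles of $R_1$ and $R_2$, of degree at most $2M$, and $P$ is a polynomial. Since each $R_i$ decays at infinity, $\deg P \le \deg Q - 1 \le 2M-1$. The cancellation in the expansion at infinity gives $D(z)=O(z^{-n_\infty-1})$ as $z\to\infty$, which forces $\deg P \le \deg Q - n_\infty - 1 \le 2M - n_\infty - 1 = p-1$. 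The interpolation conditions give $D(z_j)=0$ for $j=1,\dots,p$; since the $z_j$ lie in the right half-plane and are not poles of $D$, they must be roots of $P$. A polynomial of degree at most $p-1$ with $p$ distinct roots must be identically zero, so $D\equiv 0$ and $R_1=R_2$.

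For the reality claim, I would introduce the conjugate rational function $\tilde R(z):=\overline{R(\bar z)}$, which if $R(z)=\sum c_j/(z+\lambda_j)$ equals $\sum \overline{c_j}/(z+\overline{\lambda_j})$, hence is again of the form \eqref{eqn:Phi_partial_fraction} with $M$ terms. I would then check that $\tilde R$ satisfies the same two conditions: at infinity, the coefficients $\xi_j$ are real (they come from \eqref{eqn:f_at_zero} applied to the real-valued $f$), so conjugation preserves the expansion; at the finite points, using $f$ real we have $F(\bar z)=\overline{F(z)}$, and combining this with the symmetry $z_{p+1-j}=\overline{z_j}$ yields
\[
\tilde R(z_j)=\overline{R(\overline{z_j})}=\overline{R(z_{p+1-j})}=\overline{F(z_{p+1-j})}=\overline{F(\overline{z_j})}=F(z_j).
\]
The uniqueness part then forces $\tilde R=R$, which is exactly the statement that $R$ takes real values on $\r$, i.e.\ that the set $\{c_j,\lambda_j\}$ is closed under complex conjugation and therefore $R$ is a real rational function.

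The main obstacle, modest as it is, lies in the degree-counting step: one has to be slightly careful when $R_1$ and $R_2$ share a pole, and one must justify that the $p$ interpolation points $z_j$ are truly distinct from the poles of $D$ (so that the vanishing of $D$ at $z_j$ really forces $P(z_j)=0$). Both points are handled by noting that $R_i(z_j)=F(z_j)$ is a finite value, so no $z_j$ can be a pole of either $R_i$, hence not of $D$; shared poles between $R_1$ and $R_2$ are absorbed into $Q$ without affecting the bound $\deg P\le p-1$. Distinctness of the $z_j$ is a standing assumption implicit in a well-posed multi-point interpolation problem (repeated nodes would either be redundant or require Hermite-type matching, which is not prescribed here).
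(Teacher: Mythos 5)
Your proposal is correct and follows essentially the same route as the paper's own proof: a degree count on the numerator of the difference $R_1-R_2$ (degree at most $p-1$ yet $p$ distinct zeros) for uniqueness, followed by applying that uniqueness to the conjugate function $\overline{R(\bar z)}$ to get reality. The extra care you take about shared poles and about the interpolation points not being poles is a harmless refinement of the same argument.
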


\begin{proof}
Suppose there exist two rational functions $R_1$ and $R_2$ of the form \eqref{eqn:Phi_partial_fraction} that satisfy \eqref{eqn:Phi_at_z_j} and \eqref{eqn:Phi_at_infty}. Each $R_i$ can be written as $R_i(z)=P_i(z)/Q_i(z)$, where $\deg(P_i)<\deg(Q_i)=M$. The fact that both $R_1$ and $R_2$ satisfy the interpolation and asymptotic conditions implies that their difference
\[
G(z):=R_1(z)-R_2(z) = \frac{P_1(z)Q_2(z)-P_2(z)Q_1(z)}{Q_1(z)Q_2(z)}
\]
has $p$ zeros at the points $\{z_j\}_{1 \le j \le p}$ and satisfies $G(z)=O(z^{-n_{\infty}-1})$ as $z\to\infty$. Consequently, the numerator $P_1Q_2-P_2Q_1$ is a polynomial with $p$ distinct zeros, yet
\[
\deg(P_1Q_2-P_2Q_1) \le \deg(Q_1Q_2)-n_{\infty}-1 = 2M-n_{\infty}-1=p-1.
\]
Thus the polynomial must vanish identically, and we conclude that $R_1(z)\equiv R_2(z)$.  

Next, suppose that $R$ is a rational function of the form \eqref{eqn:Phi_partial_fraction} satisfying \eqref{eqn:Phi_at_z_j} and \eqref{eqn:Phi_at_infty}. Define
\[
R_3(z) := \overline{R(\overline{z})},
\]
that is, $R_3$ is obtained from $R$ by conjugating all coefficients. For every $j=1,2,\dots,p$ we then have
\[
R_3(z_j) = \overline{R(\overline{z_j})}
= \overline{F(z_{p+1-j})} = F(z_j) = R(z_j).
\]
Since $\xi_j\in \r$ and the asymptotic condition \eqref{eqn:Phi_at_infty} holds, it follows that $R(z)-R_3(z)=O(z^{-n_{\infty}-1})$ as $z\to\infty$. By the same uniqueness argument as above, we conclude that $R(z)\equiv R_3(z)$ for all $z\in \c$. In particular, $R(z)\in \r$ whenever $z\in \r$, which proves that $R$ is a real rational function.
\end{proof}

The multi-point Pad\'e approximation problem described in \eqref{eqn:Phi_at_z_j} and \eqref{eqn:Phi_at_infty} requires as input two positive integer parameters, $M$ and $n_{\infty}$, $p=2M-n_{\infty}$ complex numbers $\{z_j\}_{1\le j \le p}$, together with the
values of $\{F(z_j)\}_{1\le j\le p}$ and coefficients $\{\xi_j\}_{0\le j < n_{\infty}}$, appearing in \eqref{eqn:Phi_at_z_j} and \eqref{eqn:Phi_at_infty}. Even with the symmetry condition $z_{p+1-j}=\bar z_j$, this still leaves too many degrees of freedom. To simplify the algorithm, we propose the following special choice of interpolation points $z_j$.  We assume $p \ge 2$, take two numbers $A\ge 0$ and $B>0$, and consider the piecewise linear curve $\gamma_{A,B}$ consisting of two intervals:
\[
\gamma_{A,B}:=[A-B \i , 0] \cup [0, A+B \i].
\]
We then define $z_1=A-B\i$, $z_p=A+B\i$, and place the remaining points $\{z_j\}_{2 \le j \le p-1}$ equidistantly along the curve $\gamma_{A,B}$. See Figure~\ref{fig1} for an illustration.  

The motivation for this choice comes from Watson's Lemma \cite{Wong_1972}, which implies that $F$ admits the asymptotic expansion \eqref{eqn:F_at_infinity} for large $z$ in any sector
\[
S_{\theta}:=\{ z\in \c \, : \, |\arg(z)|\le \theta\}, \qquad \theta \in (0,\pi/2).
\]
The farther $z$ lies from the origin within such a sector, the more accurately $R(z)$ approximates $F(z)$ (since $F(z)-R(z)=O(z^{-n_{\infty}-1})$ as $z\to \infty$); see Figure~\ref{fig_1c}. We therefore enforce $p$ interpolation conditions on the boundary $\gamma_{A,B}$ of this sector (where $B/A=\tan(\theta)$). This allows us to control the error $F(z)-R(z)$ not only for large $z$, but also for moderate and small values of $z \in S_{\theta}$. By the maximum modulus principle, if $F$ and $R$ are close for large $z$ and along the boundary of $S_{\theta}$, then they will also be close throughout the interior of this sector. Consequently, it is unnecessary to place interpolation points $z_j$ in the interior of $S_{\theta}$.  

\begin{figure}
\centering
\subfloat[][$p=8$, $A=2$, $B=6$]{\label{fig_1_even}\includegraphics[height =5.5cm]{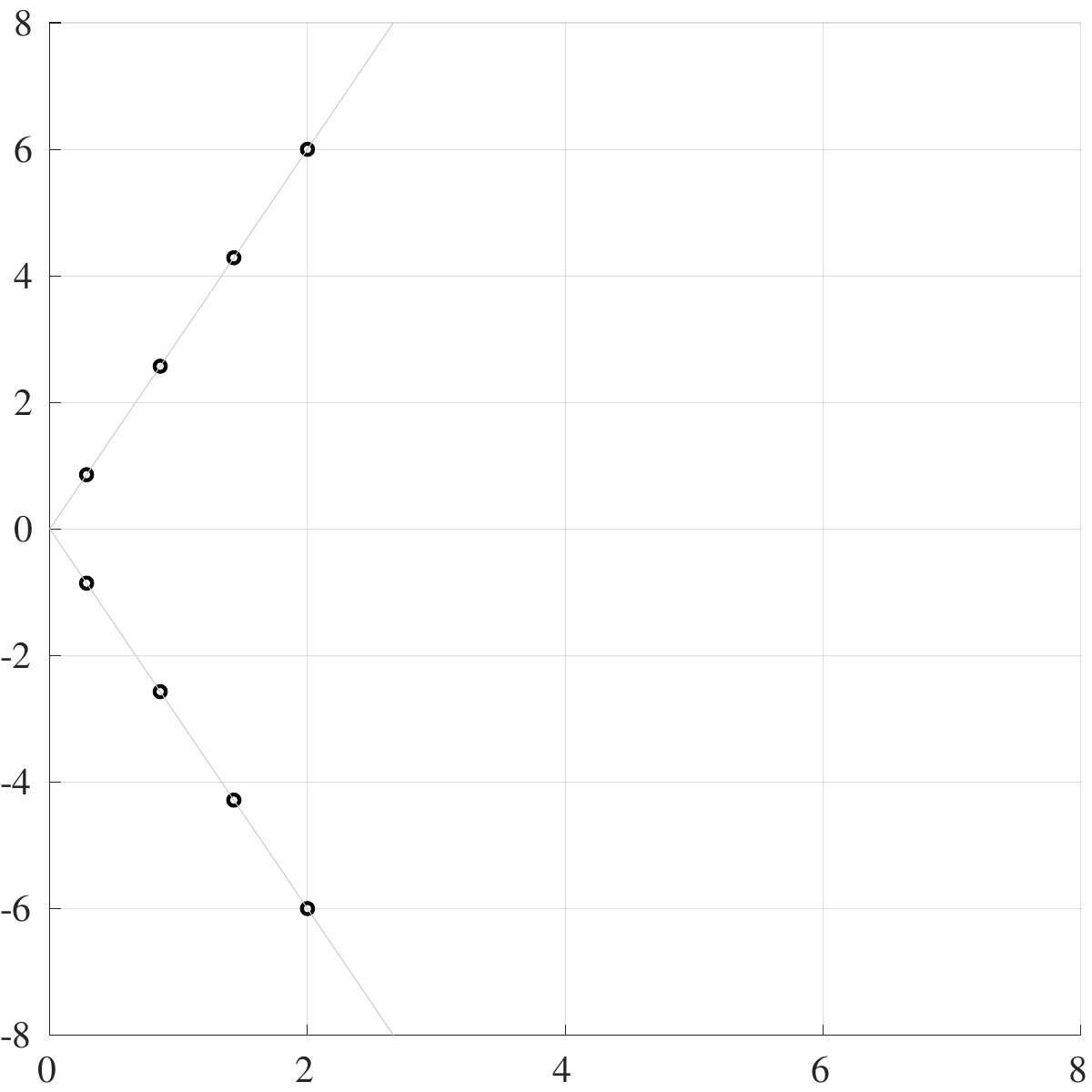}} 
\hspace{0.15cm}
\subfloat[][$p=7$, $A=1$, $B=4$]{\label{fig_1_odd}\includegraphics[height =5.5cm]{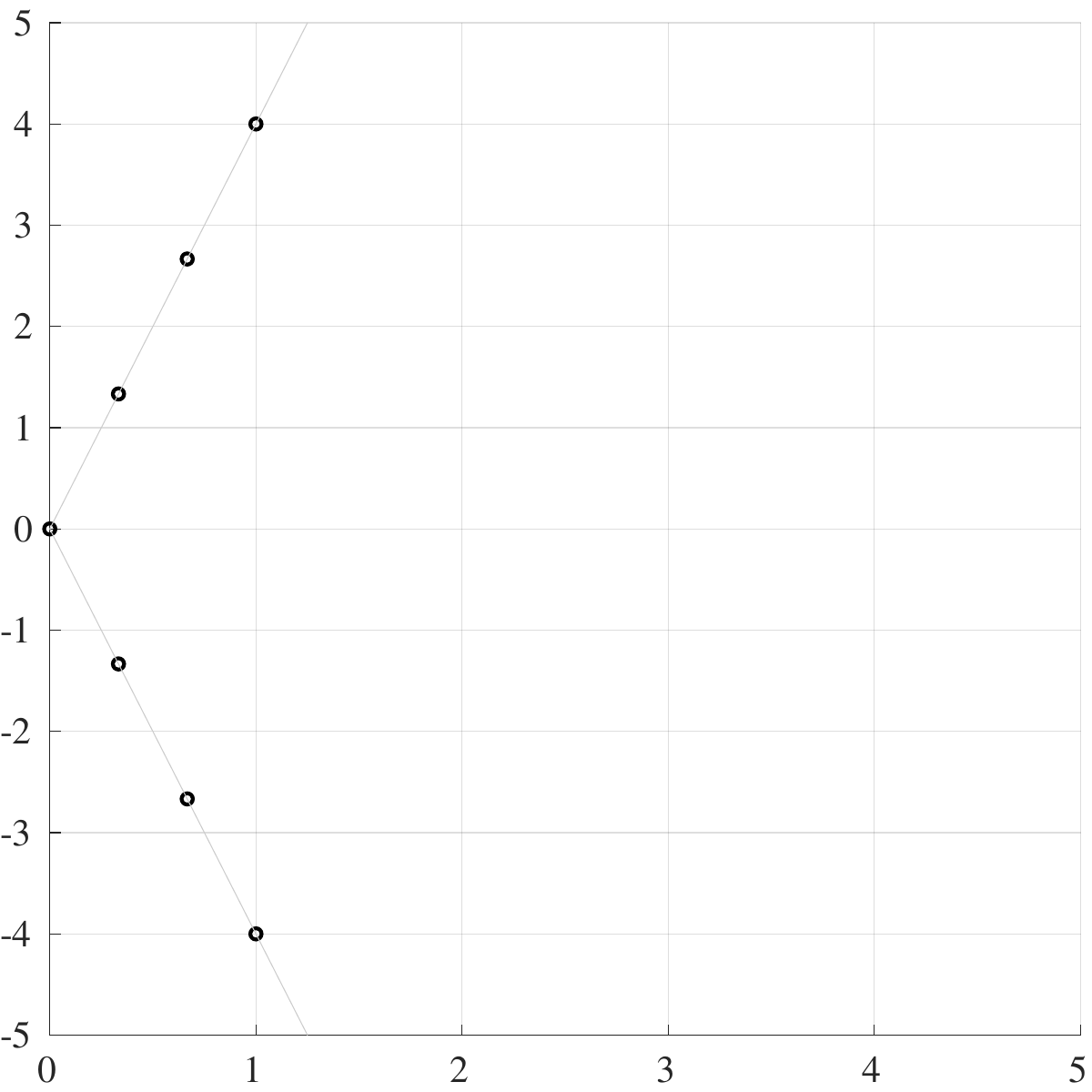}}  
\hspace{0.15cm}
\subfloat[][a sector $S_{\theta} \subset \c$]{\label{fig_1c}\includegraphics[height =5.5cm]{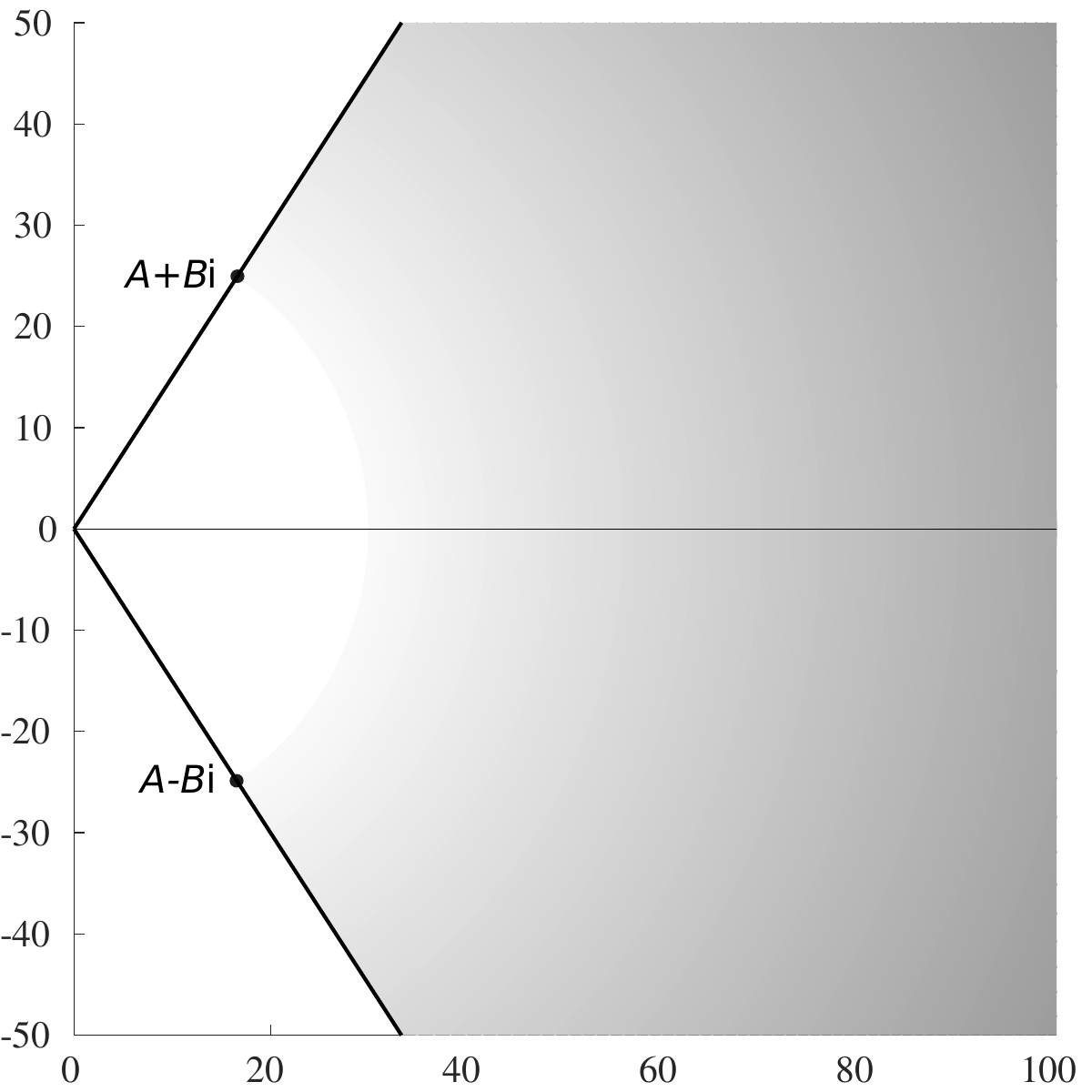}}    
\caption{Configuration of points $\{z_j\}_{1 \le j \le p}$ for even/odd values of $p$. The points are indexed so that $\im(z_j)<\im(z_{j+1})$, and they satisfy the symmetry relation $z_{p+1-j}=\bar z_j$.} 
\label{fig1}
\end{figure}

We can now summarize our algorithm: \label{page_algorithm}
\begin{itemize}
\item[(i)] Choose $M\ge 2$ (the number of terms in the exponential sum approximation $\phi$) and an integer $n_{\infty} \in [1,2M-2]$;
\item[(ii)] Compute the coefficients $\{\xi_j\}_{0\le j < n_{\infty}}$ in the expansion \eqref{eqn:f_at_zero};
\item[(iii)] Choose $A\ge 0$ and $B>0$ and determine $p=2M-n_{\infty}$  interpolation points $z_j$, with $z_1=A-B\i$, $z_{p}=A+B\i$, and the remaining $p-2$ points chosen as shown in Figure~\ref{fig1};
\item[(iv)] Solve the multi-point Pad\'e approximation problem: find a rational function $R$ satisfying the conditions \eqref{eqn:Phi_at_z_j} and \eqref{eqn:Phi_at_infty}; 
\item[(v)] If step (iv) is successful, compute the poles $\{-\lambda_j\}$ of $R$. If all poles are simple, write $R$ in partial fraction form \eqref{eqn:Phi_partial_fraction} and recover the coefficients $c_j$ of the exponential sum $\phi$ given by \eqref{def:phi};
\item[(vi)] Verify whether the exponential sum $\phi$ provides a sufficiently accurate approximation to $f$, using appropriate criteria. If $\phi$ fails this test, go back to step (iii) and choose different values of $A$ and $B$. 
\end{itemize}

All of the above steps are straightforward, with the exception of step (iv). Several methods exist for solving multi-point Pad\'e approximation problems, such as Kronecker's algorithm, Thiele's method, the Thacher-Tukey algorithm, and the generalized Q.D. and $\epsilon$ algorithms. All of these algorithms are described in \cite[Section 1.1]{BakerGravesMorris1981b}. We adopt the continued fraction  approach due to its speed and simplicity: the algorithm is fully recursive and avoids solving large systems of linear equations. The only drawback is that this approach requires the additional assumption  $\xi_0 \neq 0$. This restriction is not severe, however, and in Section~\ref{subsection_Lognormal} we describe a simple workaround. We note here that continued fractions have been applied in the context of Prony's method and signal processing in \cite{Sauer2021}.  

The remainder of the paper is organized as follows. In the next section we present the continued fraction algorithm for solving the multi-point Pad\'e approximation problem in step (iv). Section~\ref{section_Numerical_results} discusses the implementation of our algorithm and reports results from numerous numerical experiments that illustrate both the efficiency of the algorithm and the accuracy of the resulting approximations.  In Section \ref{Section_conclusion} we present some concluding remarks and outline directions for future work.

\section{The continued fraction algorithm}\label{section_Continued_Fractions}

We say that a rational function $R(z)=P(z)/Q(z)$, where $P$ and $Q$ are polynomials, has degree $[n_1/n_2]$ if $\deg(P)=n_1$ and
$\deg(Q)=n_2$. 
In this section we present a continued fraction algorithm for constructing a rational function $R(z)$ of degree $[M-1/M]$ that satisfies $p$ interpolation conditions
\begin{equation}\label{R_conditions_at_zj}
R(z_j)=a_j,  \qquad j=1,2,\dots,p,
\end{equation}
together with an asymptotic expansion at infinity of the form
\begin{equation}\label{eqn:R_at_infty2}
R(z)=\sum_{j=0}^{n_{\infty}-1} \xi_{j} z^{-j-1}+O(z^{-n_{\infty}-1}), \qquad z\to \infty.
\end{equation}
The parameters $p, n_{\infty} \in \mathbb{N}$ (with $p+n_{\infty}=2M$), the interpolation data $\{z_j, a_j\}_{1\le j \le p}$, and the coefficients $\{\xi_j\}_{0\le j \le n_{\infty}-1}$ are assumed to be given, and they satisfy the symmetry conditions $z_{p+1-j}=\bar z_j$, $a_{p+1-j}=\bar a_j$.  

Before describing the general algorithm, we illustrate the idea with a simple example. Let us find a rational function $R$ of degree $[1/2]$ that satisfies
\begin{equation}\label{R_conditions_at_zi}
R(1+\i)=1-2\i, \qquad R(1-\i)=1+2\i,
\end{equation}
and 
\begin{equation}\label{R_conditions_at_infty}
R(z)=z^{-1} + 2 z^{-2} + O(z^{-3}), \qquad z\to \infty.  
\end{equation}
As the first step, we define a function $R_1(z)$ by  
\begin{equation}\label{R_in_terms_of_R1}
R(z)=\frac{R(1+\i)}{1+(z-(1+\i)) R_1(z)}= \frac{1-2\i}{1+(z-1-\i) R_1(z)}. 
\end{equation}
Solving for $R_1(z)$ gives
\begin{equation}\label{R1_in_terms_of_R}
R_1(z)=\frac{1}{z-1-\i}\,\bigg[ \frac{1-2\i}{R(z)} - 1 \bigg].
\end{equation}
From \eqref{R_conditions_at_zi} and \eqref{R1_in_terms_of_R} we obtain
\begin{equation}\label{R1_conditions_at_zi}
R_1(1-\i)=\tfrac{2}{5}-\tfrac{4}{5}\i,
\end{equation}
while from \eqref{R_conditions_at_infty} and \eqref{R1_in_terms_of_R}, expanding both $1/(z-1-\i)$ and $1/R(z)$ in powers of $z^{-1}$, we find
\begin{equation}\label{R1_conditions_at_infty}
R_1(z)=(1-2\i)+3\i z^{-1} + O(z^{-2}), \qquad z\to \infty. 
\end{equation}

We now repeat this process. Using \eqref{R1_conditions_at_zi}, we define $R_2(z)$ by
\begin{equation}\label{R1_in_terms_of_R2}
R_1(z)=\frac{R_1(1-\i)}{1+(z-(1-\i)) R_2(z)}=\frac{2/5-4\i/5}{1+(z-1+\i) R_2(z)}.
\end{equation}
After solving for $R_2(z)$ and using \eqref{R1_conditions_at_infty}, we expand $R_2(z)$ in powers of $z^{-1}$ to obtain
\begin{equation}\label{R2_conditions_at_infty}
R_2(z)=-\tfrac{3}{5} z^{-1}-\big(\tfrac{3}{25}-\tfrac{9}{25}\i\big) z^{-2} + O(z^{-3}), \qquad z\to \infty. 
\end{equation} 
We claim that
\begin{equation}\label{R2_cf_at_infty}
R_2(z)=-\frac{3/5}{\,z-1/5+3\i/5}  
\end{equation}
satisfies condition \eqref{R2_conditions_at_infty}. Indeed, as $z\to \infty$,
\begin{align*}
R_2(z)&=-\frac{3}{5z}\times\frac{1}{1-(1/5-3\i/5)z^{-1}}
=-\frac{3}{5z}\sum_{k\ge 0}(1/5-3\i/5)^k z^{-k}\\
&=-\tfrac{3}{5} z^{-1}-\big(\tfrac{3}{25}-\tfrac{9}{25}\i\big) z^{-2} + O(z^{-3}),  
\end{align*}
which is exactly \eqref{R2_conditions_at_infty}.  

Finally, we perform back-substitution to reconstruct $R$. Substituting $R_2$ given by \eqref{R2_cf_at_infty} into \eqref{R1_in_terms_of_R2} yields $R_1$, which is then substituted into \eqref{R_in_terms_of_R1}. The result is the continued fraction representation
\begin{align*}
R(z)=\cfrac{1-2\i}{1+\cfrac{(2/5-4\i/5)(z-1-\i)}{1-\cfrac{(3/5)(z-1+\i)}{z-1/5+3\i/5}}}.
\end{align*}
This can be written in the equivalent (standard) form 
\[
R(z)=\frac{z+1}{z^2-z+1}.
\]
A direct verification  confirms that this rational function satisfies both \eqref{R_conditions_at_zi} and \eqref{R_conditions_at_infty}. By Proposition~\ref{prop_unique}, we have therefore found the unique solution to the multi-point Pad\'e approximation problem \eqref{R_conditions_at_zi} and \eqref{R_conditions_at_infty}.

We are now ready to present the general algorithm for finding the interpolating rational function. As a first step, we define a rational function $R_1$ by 
\begin{equation}\label{CF_transformation_finite}
R(z)=\frac{a_1}{1+(z-z_1) R_1(z)}, 
\end{equation}
which is equivalent to 
\begin{equation*}
R_1(z)=\frac{1}{z-z_1}\,\bigg[ \frac{a_1}{R(z)} -1 \bigg].
\end{equation*}
By construction, the function $R(z)$ defined through \eqref{CF_transformation_finite} satisfies $R(z_1)=a_1$, as long as $R_1$ does not have a pole at $z_1$. The remaining $p-1$ interpolation conditions for $R$ (at the points $\{z_j\}_{2\le j \le p}$) are satisfied if and only if
\begin{equation}\label{R1_conditions_at_zi2}
R_1(z_{j})=a_j^{(1)}:=\frac{1}{z_{j}-z_1} 
\bigg[ \frac{a_1}{a_{j}}-1 \bigg], \qquad j=2,3,\dots,p. 
\end{equation}
Thus, the linear fractional transformation \eqref{CF_transformation_finite} reduces the problem from interpolation at $p$ finite points to interpolation at $p-1$ points. 

Next, we investigate the effect of the transformation
\eqref{CF_transformation_finite} on the expansion of $R$ as $z \to \infty$. For this purpose it is convenient to assume that $R$ has a series expansion in powers of $z^{-1}$ (valid for sufficiently large $z$):  
\begin{equation*}
R(z)=\sum_{j\ge 0} b_j z^{-j}. 
\end{equation*}
We seek the coefficients $b^{(1)}_j$ of the corresponding expansion of $R_1(z)$: 
\begin{equation*}
R_1(z)=\sum_{j\ge 0} b^{(1)}_j z^{-j}.
\end{equation*}
Rewriting \eqref{CF_transformation_finite} gives
\begin{equation}\label{R_R1}
R(z)-z_1  R(z) R_1(z) +z R(z) R_1(z) = a_1.
\end{equation}
We now obtain equations for $b^{(1)}_j$ by comparing coefficients of powers of $z$ on both sides.  
Comparing coefficients of $z$ in \eqref{R_R1} yields
\begin{equation}\label{eqn_b_b1_1}
b_0 b^{(1)}_0=0.
\end{equation} 
Comparing constant terms (coefficients of $z^{0}$) gives
\begin{equation}\label{eqn_b_b1_0}
b_0- z_1 b_0 b_0^{(1)}+b_0 b_1^{(1)}+b_1 b_0^{(1)}=a_1,
\end{equation}
and for coefficients of $z^{-n}$ with $n\ge 1$ we obtain
\begin{equation}\label{eqn_b_b1_n}
b_n - z_1 \sum_{i=0}^n b_{n-i} b_{i}^{(1)}+ 
\sum_{i=0}^{n+1} b_{n+1-i} b_{i}^{(1)}=0, \qquad n\ge 1. 
\end{equation}

At this stage we must consider two separate cases.  

\vspace{0.2cm}
\noindent
{\bf Case 1:}  $b_0=0$. In this case, equation \eqref{eqn_b_b1_1} is automatically satisfied, and from 
\eqref{eqn_b_b1_0} we find $b_0^{(1)}=a_1/b_1$. The remaining coefficients are obtained iteratively from
\[
b_n^{(1)}=-\frac{1}{b_1} \Bigg[ b_n +  
 \sum_{i=0}^{n-1} b_{i}^{(1)} \big( b_{n+1-i}- z_1 b_{n-i} \big) \Bigg], \qquad n\ge 1,
\]
which follows from \eqref{eqn_b_b1_n}.  

\vspace{0.2cm}
\noindent
{\bf Case 2:}  $b_0 \neq 0$.  
Then \eqref{eqn_b_b1_1} implies $b_0^{(1)}=0$. From \eqref{eqn_b_b1_0} and 
\eqref{eqn_b_b1_n} we obtain
\[
b_1^{(1)}=\frac{a_1}{b_0}-1,
\]
and for $n\ge 1$,
\[
b_{n+1}^{(1)}=-\frac{1}{b_0} \Bigg[ b_n +  
 \sum_{i=1}^{n} b_{i}^{(1)} \big( b_{n+1-i}- z_1 b_{n-i} \big) \Bigg].
\]

We observe that in Case~1 (when $b_0=0$), if we know the next $k$ coefficients $\{b_j\}_{1 \le j \le k}$ of the expansion of $R(z)$ at $z=\infty$, then we can determine $k$ coefficients 
$\{b_j^{(1)}\}_{0 \le j \le k-1}$ of the corresponding expansion of $R_1(z)$. In this case, the first coefficient $b_0^{(1)}$ is non-zero. In Case~2 (when $b_0 \neq 0$), if we know $k$ coefficients $\{b_j\}_{0 \le j \le k-1}$ of the expansion of $R(z)$ at $z=\infty$, then necessarily $b_0^{(1)}=0$, and we can compute the next $k$ coefficients 
$\{b_j^{(1)}\}_{1 \le j \le k}$ of the expansion of $R_1(z)$. This behavior is consistent with the simple example considered at the beginning of this section.  

Now we can describe the general algorithm for finding the rational function $R(z)$ satisfying \eqref{R_conditions_at_zj} and \eqref{eqn:R_at_infty2}. Define $\gamma_0=0$ and $\gamma_j=\xi_{j-1}$ for $j=1,2,\dots,n_{\infty}$, so that $R$ has the following asymptotic expansion as $z \to \infty$: 
\begin{equation}\label{R_infty_gamma}
R(z)=\sum_{j=0}^{n_{\infty}} \gamma_j z^{-j} + 
O(z^{-n_{\infty}-1}). 
\end{equation}
We now define $R_1$ by 
\begin{equation}\label{CF_transformation_finite2}
R(z)=\frac{a_1}{1+(z-z_1) R_1(z)}, 
\end{equation}
with interpolation values $a_j^{(1)}=R_1(z_j)$ for $2 \le j \le p$, computed using \eqref{R1_conditions_at_zi2}. Since $\gamma_0=0$, the function $R_1$ has the following expansion for large $z$: 
\begin{equation*}
R_1(z)=\sum_{j=0}^{n_{\infty}-1} \gamma^{(1)}_j z^{-j}+O(z^{-n_{\infty}}). 
\end{equation*}
Here $\gamma_j$ and $\gamma_j^{(1)}$ play the same role as $b_j$ and $b_j^{(1)}$ in the preceding discussion, and the coefficient $\gamma_0^{(1)}$ is non-zero. Note that if a rational function $R_1$ does not have a pole at $z_1$, the function $R$ constructed via 
\eqref{CF_transformation_finite2} will satisfy the interpolation condition $R(z_1)=a_1$. 

We then repeat the procedure, reducing the number of interpolation points at each step. Specifically, we define
\begin{equation*}
R_1(z)=\frac{a^{(1)}_2}{1+(z-z_2) R_2(z)}, 
\end{equation*}
and compute $a_j^{(2)}=R_2(z_j)$ for $j=3,4,\dots,p$ along with the coefficients of the expansion of $R_2(z)$ at $z=\infty$. Since $\gamma_0^{(1)}\neq 0$, the expansion of $R_2$ at infinity takes the form
\begin{equation*}
R_2(z)=\sum_{j=0}^{n_{\infty}} \gamma^{(2)}_j z^{-j}+O(z^{-n_{\infty}-1}), \qquad z\to \infty,
\end{equation*}
with $\gamma_0^{(2)}=0$.  The condition that $R_1$ does not have a pole at $z_1$ is equivalent to $R_2(z_1) \neq 1/(z_2-z_1)$. As long as this condition is satisfied, the function $R$ given by 
\eqref{CF_transformation_finite2} will satisfy the interpolation condition $R(z_1)=a_1$. In order for $R_1$ to satisfy the interpolation condition $R_1(z_2)=a_2^{(1)}$, the function $R_2$ should not have a pole at $z=z_2$. 

Repeating this procedure $p-2$ times, we define successively the functions $R_3, R_4, \dots, R_p$. Performing back-substitution (as in the illustrative example at the beginning of this section) yields the continued fraction representation of $R$: 
\begin{equation}\label{R_as_CF_ending_in_Rp}
R(z)=\cfrac{a_1}{1+\cfrac{(z-z_1) a^{(1)}_2}
{1+\cfrac{(z-z_2) a^{(2)}_3}{\ddots \cfrac{\;\;\dots\;\;}
{1+\cfrac{(z-z_{p-1}) a^{(p-1)}_p}{1+(z-z_p) R_p(z)}}}}}.
\end{equation}
As long as $R_p$ does not have a pole at $z_p$ and $R_j(z_{j-1}) \neq 1/(z_j-z_{j-1})$ for $j=2,3,\dots,p$, the rational function $R$ will satisfy the interpolation conditions $R(z_j)=a_j$ for $j=1,2,\dots,p$. 
Moreover, $R$ has the prescribed asymptotics \eqref{R_infty_gamma} if and only if $R_p$ satisfies  
\begin{equation}\label{eqn_R_p_at_infty}
R_p(z)=\sum_{j=0}^K \gamma^{(p)}_j z^{-j} + O(z^{-K-1}), \qquad z\to \infty,
\end{equation}
where the coefficients $\gamma_j^{(p)}$ are computed by the iterative procedure above, and 
\[
K=\begin{cases}
n_{\infty}, & \text{if $p$ is even}, \\
n_{\infty}-1, & \text{if $p$ is odd}.
\end{cases}
\]
Note that $K$ is always an even integer, since $p+n_{\infty}=2M$ is even.  
 
The last step of the algorithm is to express $R_p(z)$ (which must satisfy \eqref{eqn_R_p_at_infty}) as a continued fraction (see \cite[Section~4.2]{Baker1981}):  
\begin{equation}\label{eqn_R_p}
R_p(z)=\gamma_0^{(p)}+
\cfrac{d_1 z^{-1}}{1+\cfrac{d_2 z^{-1}}{\ddots 
\cfrac{\dots}{1+\cfrac{d_{K-1}z^{-1}}{1+d_{K} z^{-1}}}}}, 
\end{equation}
where $d_1:=\gamma_1^{(p)}$, and the remaining coefficients $\{d_j\}_{2\le j \le K}$ are computed by an iterative scheme similar to \eqref{CF_transformation_finite}. Specifically, we define $R_{p+1}(z)$ by 
\[
R_p(z)=\gamma_0^{(p)}+\frac{d_1}{z R_{p+1}(z)},  
\]
which is equivalent to 
\[
R_{p+1}(z) = \frac{d_1}{z \big(R_p(z)-\gamma_0^{(p)}\big)}.
\]
From \eqref{eqn_R_p_at_infty} and this relation, we compute the expansion of $R_{p+1}(z)$ as $z \to \infty$: 
\begin{equation*}
R_{p+1}(z)=1+\sum_{j=1}^{K-1} \gamma^{(p+1)}_j z^{-j} + O(z^{-K}).
\end{equation*}
We then set $d_2:=\gamma_1^{(p+1)}$ and repeat this step. Next, we define 
\[
R_{p+1}(z)=1+\frac{d_2}{z R_{p+2}(z)},
\]
compute the expansion of $R_{p+2}(z)$ at $z=\infty$, and set $d_3:=\gamma_1^{(p+2)}$. Continuing this procedure for $K$ steps yields the coefficients $\{d_j\}_{1\le j \le K}$ of the continued fraction \eqref{eqn_R_p}. Substituting $R_p$ from \eqref{eqn_R_p} into \eqref{R_as_CF_ending_in_Rp} then gives the desired rational function $R(z)$.  

There are some exceptional cases in which the algorithm may fail to produce a result, specifically when division by zero occurs. This can happen if, for some $1\le j \le p-1$, the function $R_j$ vanishes at one of the points $z_i$ with $j+1 \le i \le p$. Failure can also occur if $\gamma_0^{(l)}=\gamma_1^{(l)}=0$ for some $l$, since in that case the iteration cannot proceed further to compute $\gamma_j^{(l+1)}$. There are also rare cases where the algorithm produces a rational function $R$ with multiple poles or where $R$ fails to satisfy the interpolation conditions \eqref{R_conditions_at_zj}. As discussed above, this latter situation may occur when $R_j(z_{j-1}) =1/(z_j-z_{j-1})$ for some $2\le j \le p$ or if $R_p$ has a pole at $z_p$. In all of our numerical experiments, however, such situations did not arise, and we do not regard them as a major practical concern.  

We now claim that whenever the algorithm succeeds, the resulting rational function $R$ has degree $[M-1/M]$. When $n_{\infty}=2L$ is even, then $p=2M-n_{\infty}$ is also even and $\gamma_0^{(p)}=0$. By induction on $L$, one can show that in this case $R_p$ has degree $[L-1/L]$, and hence $R$ (constructed via \eqref{R_as_CF_ending_in_Rp}) has degree $[M-1/M]$. When $n_{\infty}=2L+1$ is odd, $p$ is also odd and $\gamma_0^{(p)}\neq 0$. A similar argument shows that $R_p$ has degree $[L/L]$, and therefore $R$ again has degree $[M-1/M]$. Thus, once we obtain the rational function $R$ and verify that it satisfies the interpolation conditions \eqref{R_conditions_at_zj}, Proposition~\ref{prop_unique} ensures that $R$ is the unique solution to the multi-point Pad\'e approximation problem.

\begin{remark}
\textnormal{
As noted earlier, the integer $K$ appearing in \eqref{eqn_R_p_at_infty} is always even. Writing $K=2L$, we may identify the rational function $R^*_p(z):=R_p(1/z)$ as the $[L/L]$ Pad\'e approximant at $z=0$ to the power series 
$\sum_{j\ge 0} \gamma^{(p)}_j z^j$.  There are several methods (besides the one described above) for computing the numerator and denominator coefficients of this Pad\'e approximant. For instance, one can determine them by solving a linear system, as explained in \cite[Section~2.3]{Baker1981}. An efficient alternative that also uses continued fractions is the Q.D. algorithm (see \cite[Section~4.3]{Baker1981}); this method may be preferable when $n_{\infty}$ is large. In our examples $n_{\infty}$ is relatively small, so we used the continued fraction approach described above, which is straightforward to implement.}
\end{remark}

\section{Numerical results}\label{section_Numerical_results}

We now discuss the implementation and performance of our algorithm.  
Implementation requires high precision: we used a working precision of 100 decimal digits to compute the coefficients $c_j$ and $\lambda_j$. High precision is necessary due to the iterative nature of the continued fraction algorithm (where loss of precision may accumulate) and the need to compute roots of polynomials of relatively high degree (in some of our examples, up to degree~60).  

We wrote the code in Fortran and employed D.~H.~Bailey’s highly efficient MPFUN2020 \cite{Bailey_2020} arbitrary-precision package. Our code is available for download at \url{https://kuznetsovmath.ca/} or at  \href{https://github.com/Alexey-Kuznetsov-math/exponential_sum_approximations}{github}. The module {\texttt{polynomial\_module.f90}} defines two derived types: a polynomial and a rational function. A polynomial is specified by a sequence of coefficients of type {\texttt{mp\_real}} or {\texttt{mp\_complex}} (multiple-precision types provided by the MPFUN2020 package). A rational function is represented as a pair of polynomials corresponding to its numerator and denominator. This module includes subroutines for basic operations with polynomials and rational functions (addition, subtraction, multiplication, and division), as well as routines for computing roots of polynomials and partial fraction decompositions of rational functions. Polynomial roots are computed using the Ehrlich–Aberth method \cite{Aberth_1973,Ehrlich_1967}.  

The algorithm requires the values $F(z_j)$ of the Laplace transform of $f$. If $F(z)$ is not known in closed form, we evaluate $F(z_j)$ numerically by applying the double-exponential quadrature of Takahasi–Mori \cite{Takahasi_1974} to the integral in \eqref{def:F_Laplace_transform}: 
\begin{equation}\label{double_exp_quadrature}
F(z)=\int_0^{\infty} f(x) e^{-z x} \, \d x 
\approx h \sum_{n \in \mathbb{Z}}
x_n f(x_n) e^{-z x_n} \big(1+e^{-nh}\big),
\end{equation}
where $h>0$ is a small step size and $x_n:=\exp(nh-\exp(-nh))$.  
Formula \eqref{double_exp_quadrature} is obtained by applying the trapezoidal rule to the integral after the change of variables $x=\exp(u-\exp(-u))$. The infinite sum converges very rapidly and is truncated once its terms fall below the working precision.  

Our implementation of the exponential sum approximation algorithm is reasonably fast. For example, finding the coefficients $c_j$ and $\lambda_j$ for the exponential sum approximation to the hockey stick function (see Section~\ref{subsection_hockeystick}) with $M=30$ ($M=60$) terms took about $0.1$ seconds (respectively, $1$ second) on a standard laptop (Intel Core i5-1250P processor with 16GB of RAM). This example is relatively simple since the Laplace transform is known explicitly. When $F(z_j)$ must be computed numerically, the runtime increases substantially. In the lognormal example of Section~\ref{subsection_Lognormal}, computing the coefficients $c_j$ and $\lambda_j$ for $M=30$ took $1.5$ seconds. Fortunately, the computation of $F(z_j)$ for different values of $j$ can be fully parallelized. With OpenMP parallelization (compiled using the {\texttt{-fopenmp}} flag), the same example with $M=30$ took only $0.4$ seconds instead of $1.5$ seconds in serial execution.

Our algorithm depends on four parameters: two positive integers $M$ and $n_{\infty}$ (with the constraint that $p=2M-n_{\infty} \ge 2$), and two real numbers $A\ge 0$ and $B>0$. Based on extensive experiments, we can offer the following empirical guidelines for choosing these parameters:  
\begin{itemize}
\item Larger values of $M$ generally yield better approximations, corresponding to more terms in the exponential sum \eqref{def:phi}.  
\item Increasing $n_{\infty}$ improves the accuracy of $f(x)-\phi(x)$ for small values of $x$, since $f(x)-\phi(x)=O(x^{n_{\infty}})$ as $x \to 0^+$.  
\item Increasing $A$ has a similar effect, as it improves the approximation $F(z)-R(z)$ for large $z$, and Watson’s Lemma relates the large-$z$ behavior of the Laplace transform to the small-$x$ behavior of the original function. 
\item In some cases (such as the hockey stick example of Section~\ref{subsection_hockeystick}), the algorithm produced exponential sums with very large coefficients $c_j$. Increasing $B$ alleviated this issue and reduced the values of $c_j$ to acceptable levels.  
\end{itemize}

We now turn to the results of various numerical experiments. For each example considered in the remainder of this section, the parameters $c_j$ and $\lambda_j$ of all exponential sum approximations can be downloaded at \url{https://kuznetsovmath.ca/}.

\subsection{Approximating the Gaussian function}\label{subsection_Gaussian}

As our first example, we consider the Gaussian function $f(x)=\exp(-x^2)$.  
Approximations of this function by exponential sums of the form \eqref{def:phi} are widely used, for instance in the fast Gauss transform \cite{Jiang2022,Kunis_2006}. Since the Laplace transform of $f$ is not known in closed form, we computed the values of $F(z_j)$ numerically using the double-exponential quadrature \eqref{double_exp_quadrature}.

For our first experiment, we fixed $M=12$ and $n_{\infty}=2$ and used the algorithm described on page \pageref{page_algorithm} to  compute exponential sums $\phi=\phi_{A,B}$ for a range of values of $A$ and $B$. In Figure \ref{fig13} we show the contour plot of $\log_{10} \| f -\phi_{A,B} \|_{\infty}$. We see that the best approximation (in the $L_{\infty}$ norm) is obtained near $A=4.5$ and $B=10$. The graph of $f-\phi$ (where $\phi$ is computed with these values of $A$ and $B$) is shown  in Figure \ref{fig_2d}. 

Next, we repeated the same procedure. We fixed $n_{\infty}=2$ and for each $M \in \{12,18,24\}$ we tested a range of parameters $A$ and $B$ (with step size $0.5$) and selected those values of $A$ and $B$ yielding the smallest $L_1$ or $L_{\infty}$ error between $f$ and $\phi$.  The results are shown in Figure~\ref{fig2}.  
The top row of graphs (\ref{fig_2a}–\ref{fig_2c}) shows approximations optimized for the smallest $L_1$ error $\|f-\phi\|_1$, while the bottom row (\ref{fig_2d}–\ref{fig_2f}) shows those optimized for the smallest $L_{\infty}$ error. For the approximation in Figure~\ref{fig_2c}, we obtained  
\[
\| f - \phi\|_1 \approx 6.4\times 10^{-23}, 
\qquad 
\| f - \phi\|_{\infty} \approx 5.0\times 10^{-23}.
\]
For the approximation in Figure~\ref{fig_2f}, the errors were  
\[
\| f - \phi\|_1 \approx 8.5\times 10^{-23}, 
\qquad 
\| f - \phi\|_{\infty} \approx 2.4\times 10^{-23}.
\]
We observe a slight improvement (by about a factor of two in the $L_{\infty}$ error) when optimizing the parameters $A$ and $B$. The results in Figure~\ref{fig2} also suggest that both $L_1$ and $L_{\infty}$ errors decay exponentially with $M$.

\begin{figure}[t]
\centering
\includegraphics[height =7.5cm]{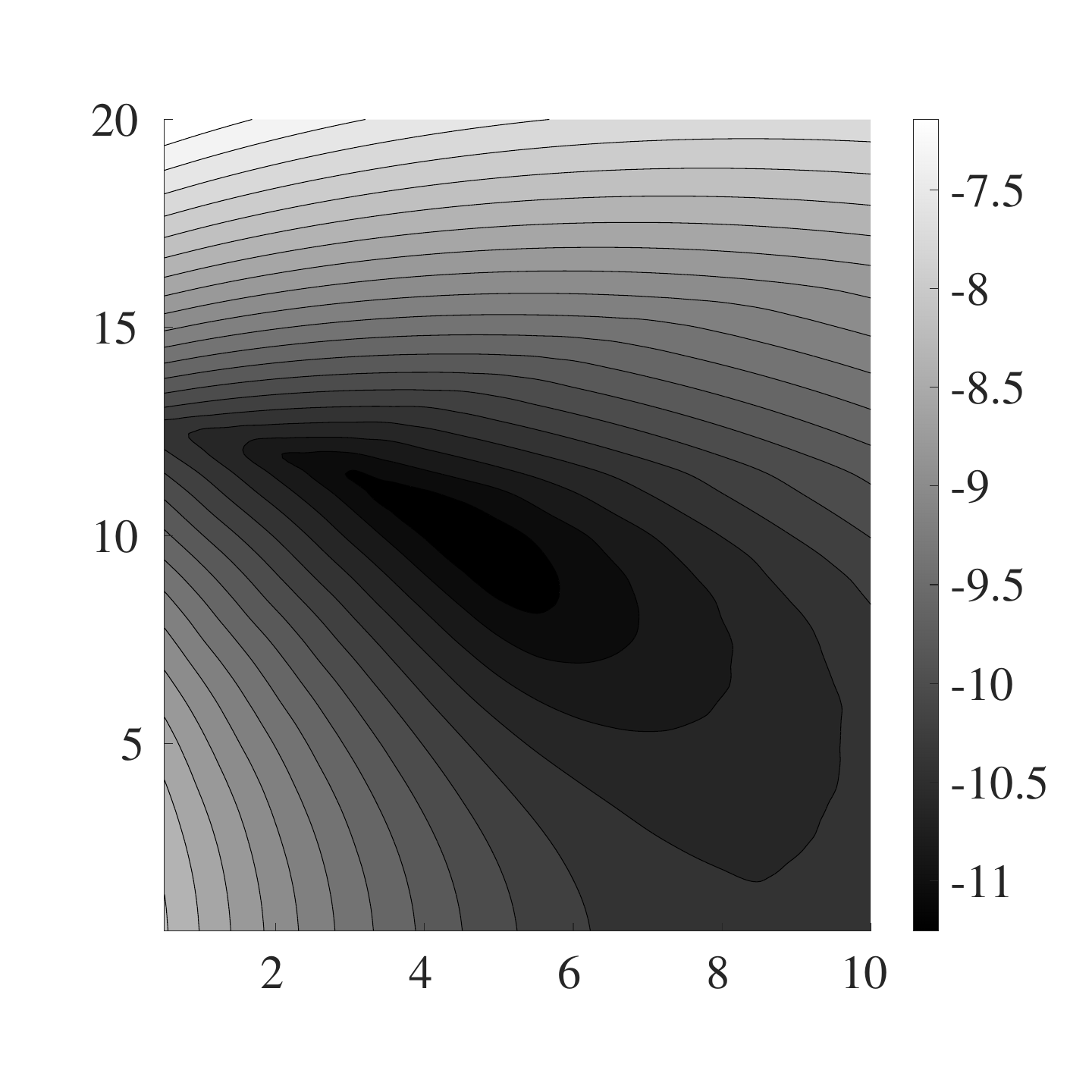} 
\caption{The contour plot of $\log_{10} \| f-\phi \|_{\infty}$ as a function of $A$ (on the $x$-axis) and $B$ (on the $y$-axis). Here $M=12$ and $n_{\infty}=2$. } 
\label{fig13}
\end{figure}
  
\begin{figure}[t]
\centering
\subfloat[][$M=12$, $A=3.5$, $B=10.5$]{\label{fig_2a}\includegraphics[height =5.5cm]{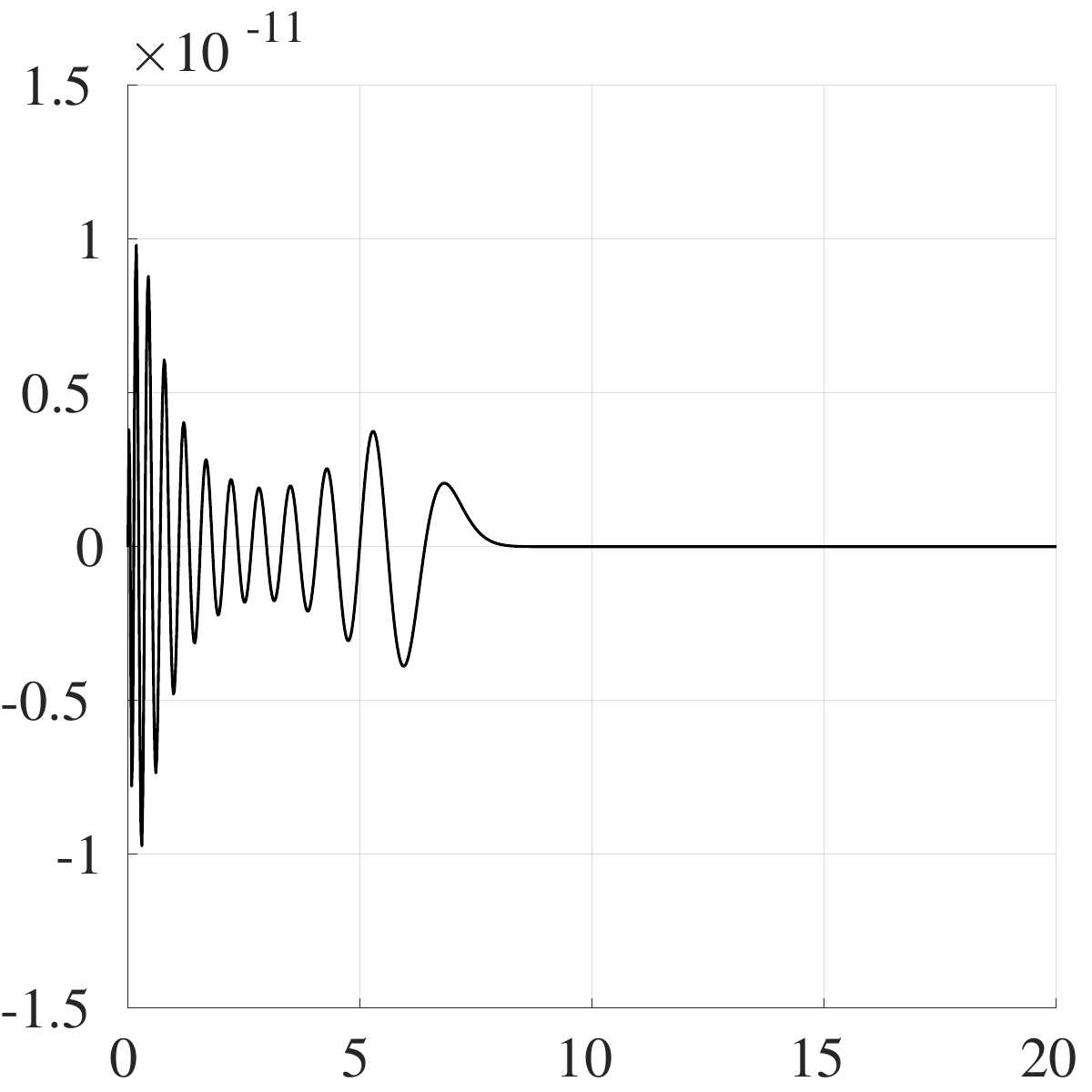}}
\hspace{0.15cm}
\subfloat[][$M=18$, $A=5$, $B=13.5$]{\label{fig_2b}\includegraphics[height =5.5cm]{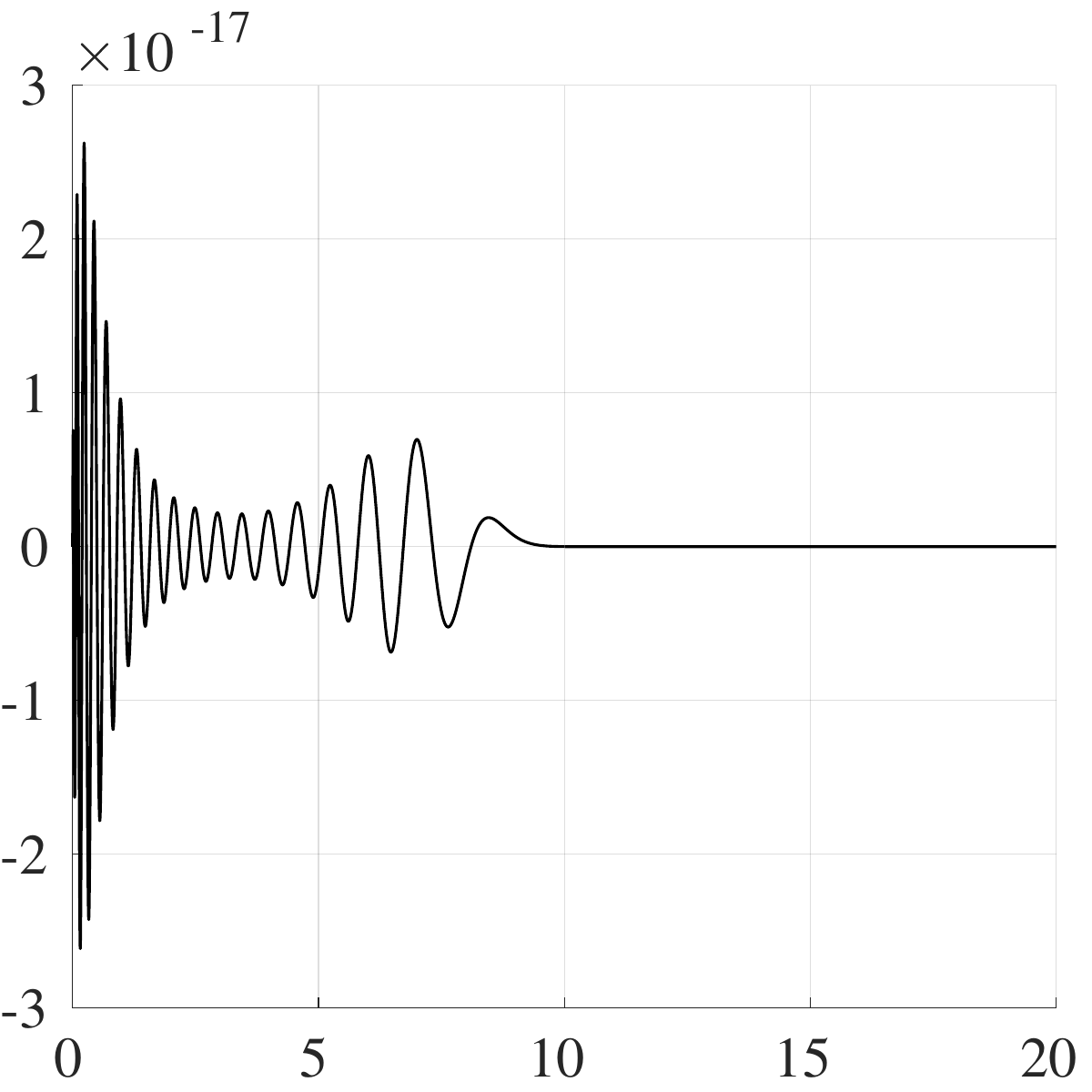}} 
\hspace{0.15cm}
\subfloat[][$M=24$, $A=6.5$, $B=16$]{\label{fig_2c}\includegraphics[height =5.5cm]{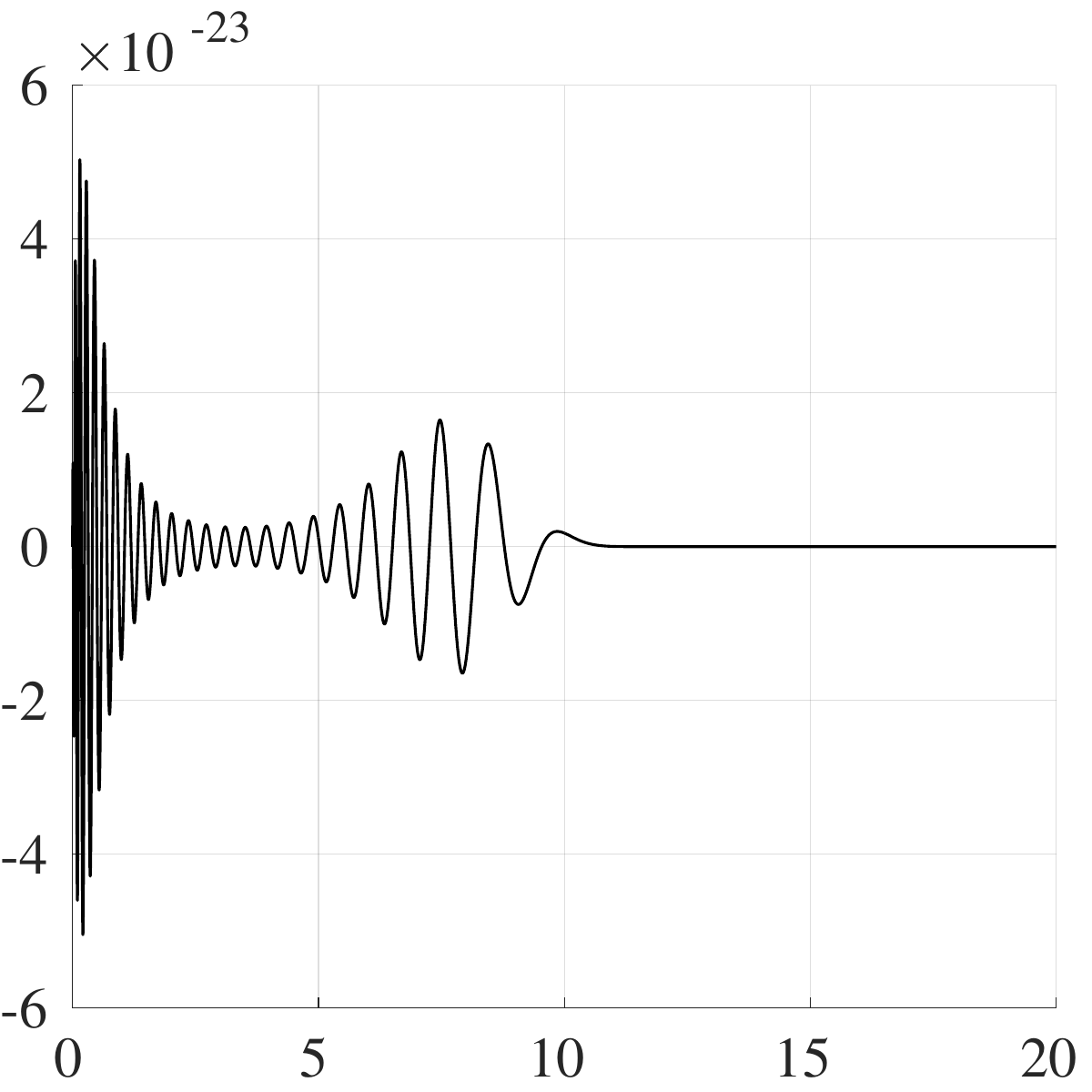}} \\
\hspace{0.15cm}
\subfloat[][$M=12$, $A=4.5$, $B=10$]{\label{fig_2d}\includegraphics[height =5.5cm]{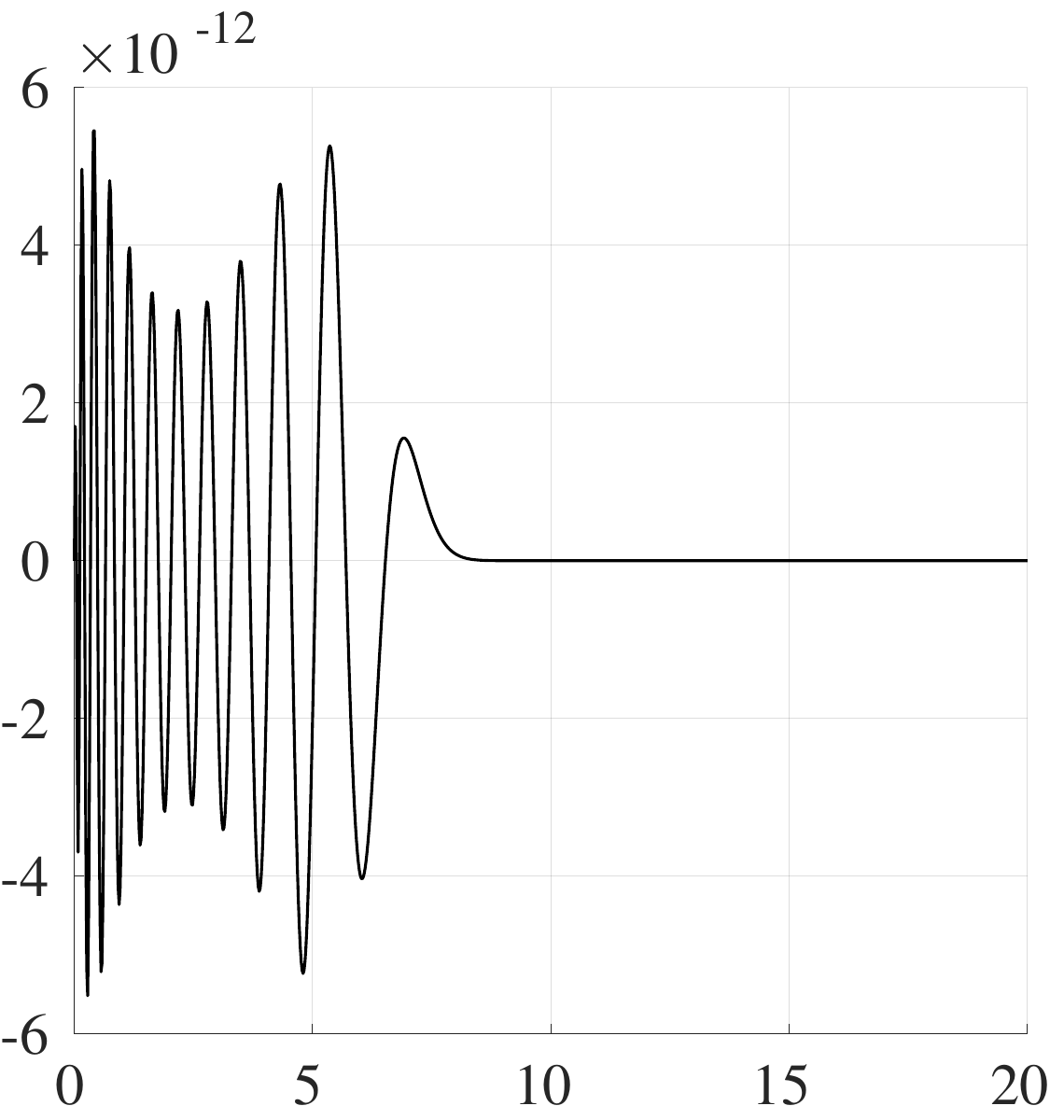}} 
\hspace{0.15cm}
\subfloat[][$M=18$, $A=6.5$, $B=11$]{\label{fig_2e}\includegraphics[height =5.5cm]{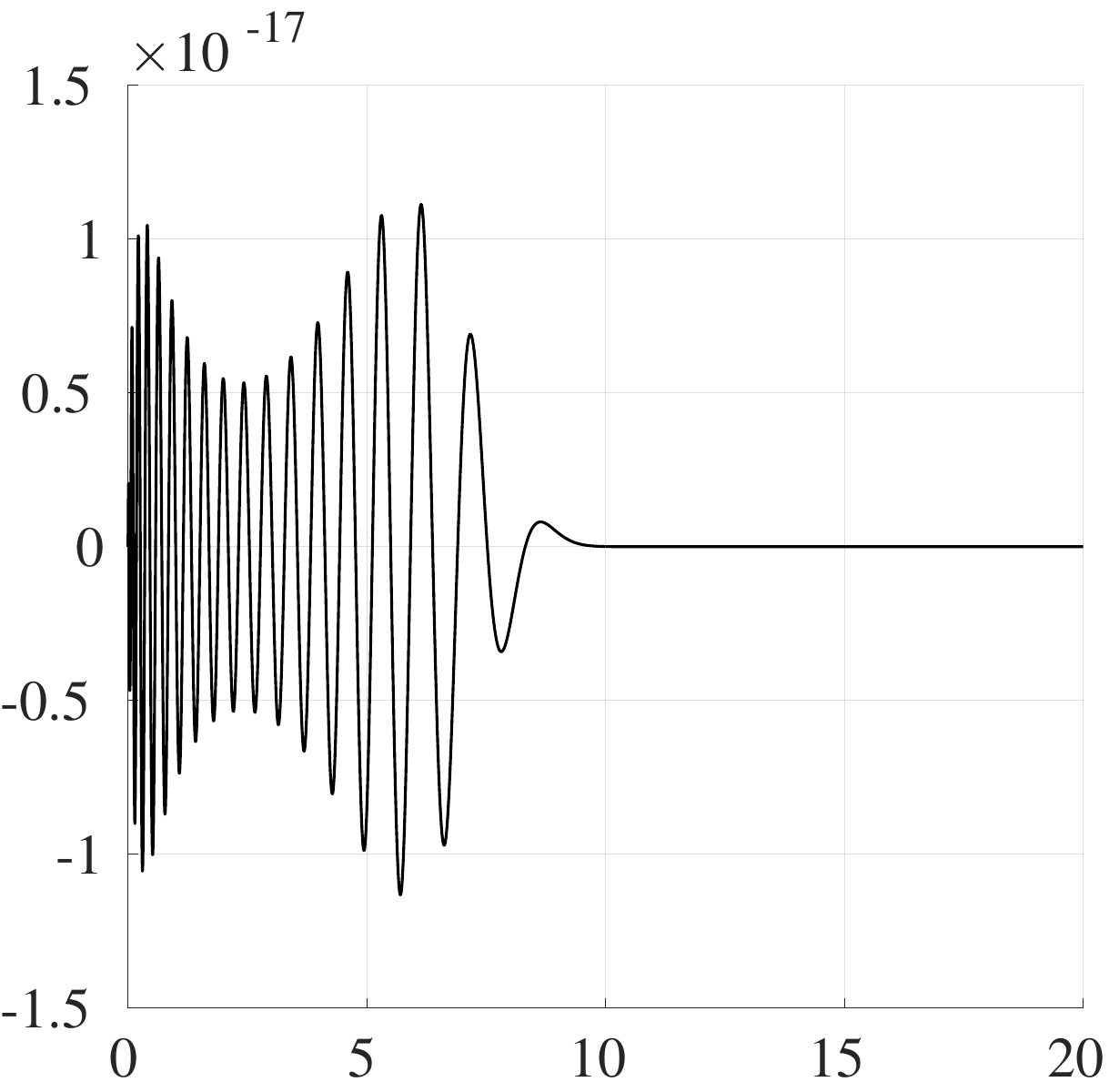}} 
\hspace{0.25cm}
\subfloat[][$M=24$, $A=8$, $B=14.5$]{\label{fig_2f}\includegraphics[height =5.5cm]{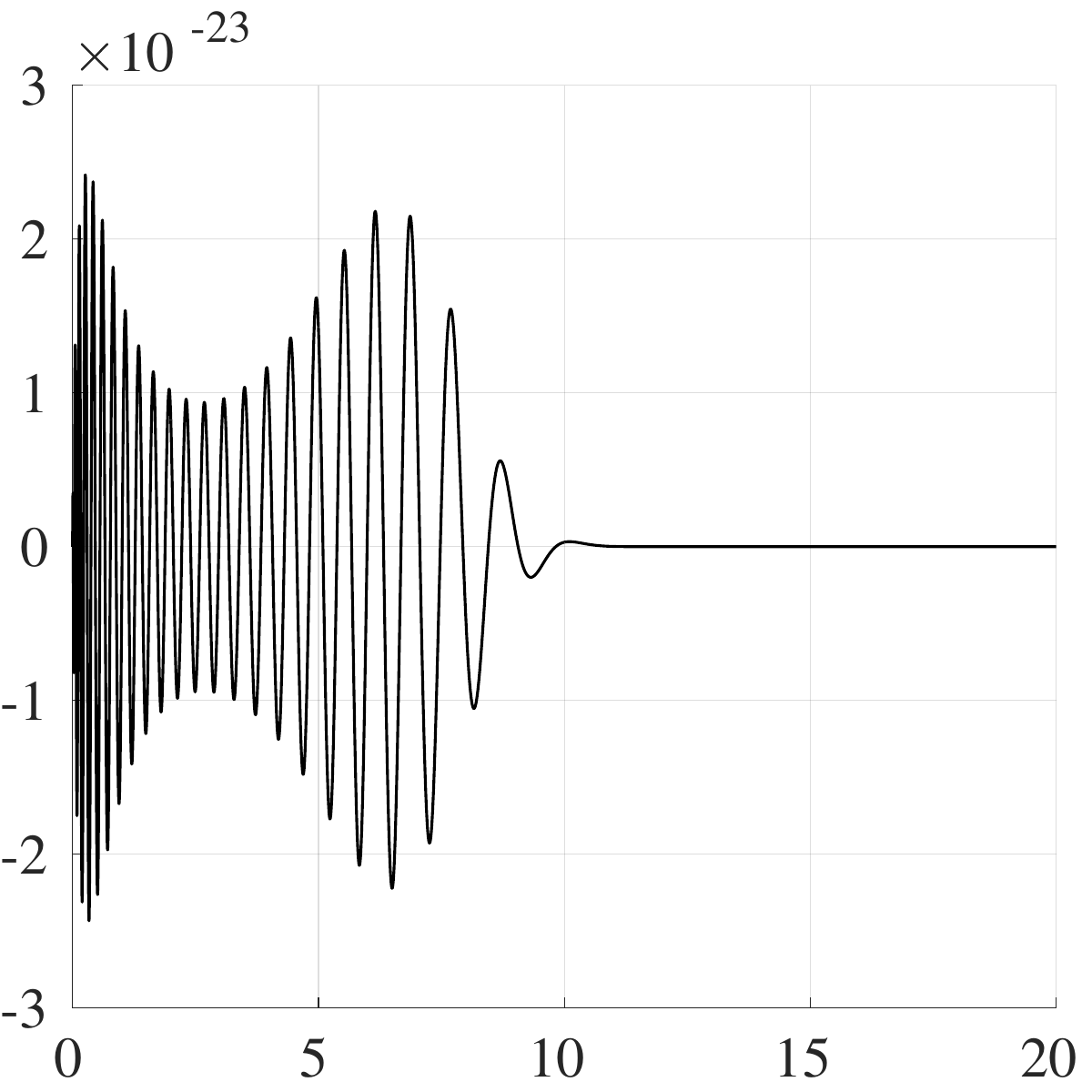}}   
\caption{The graphs of $\exp(-x^2)-\phi(x)$ for different values of $M$, $A$, and $B$. In each case we set $n_{\infty}=2$.} 
\label{fig2}
\end{figure}

\subsection{Approximations to the gamma function and the Barnes $G$-function}
\label{subsection_Gamma}

Let $G(z)$ denote the Barnes $G$-function and $\Gamma(z)$ denote the gamma function. These functions are defined via equations (5.2.1) and (5.17.3) in \cite{NIST} and satisfy $G(1)=\Gamma(1)=1$ and 
\[
\Gamma(z+1)=z \Gamma(z), \;\;\; G(z+1)=\Gamma(z) G(z), \;\;\;  z\in \c.
\]

The problem of approximating the logarithms of $\Gamma(z)$ and $G(z)$ was addressed in \cite{Kuznetsov_2022} with the help of exponential sum approximations.  Let $\phi$ be an exponential sum of the form \eqref{def:phi} that approximates the function
\begin{equation}\label{def:f(x)}
f(x):=\frac{e^{-x}}{x^3} \left[\frac{1}{2} \coth\left(\frac{x}{2}\right)- \frac{1}{x}-\frac{x}{12} \right], \;\;\; x>0. 
\end{equation}
Note that $f$ is smooth, decays exponentially as $x\to +\infty$, and has a removable singularity at $x=0$ (in fact, it is analytic in the disk $|x|<4\pi$). Let $\Phi$ be the rational function
\begin{equation*}
 \Phi(z):=\int_0^{\infty} e^{-z x} x \phi(x) \, \d x=
  \sum_{j=1}^M \frac{c_j}{(z+\lambda_j)^{2}}. 
\end{equation*}
We define approximations to $\ln(\Gamma(z))$ and $\ln(G(z))$ by
\begin{align}
\label{eq:ln_Gamma_approx}
\ln(\widehat \Gamma(z))&:=\left(z-\frac12 \right)\ln(z) - z+\frac12 \ln(2\pi)  + \frac{1}{12z}-\Phi'(z-1), \\
\label{eq:ln_G_approx}
\ln(\widehat G(z))&:=\left(\frac{z^2}{2}-z+\frac{5}{12}\right)\ln(z)
-\frac{3}{4}z^2
+\frac{1}{2}\ln(2\pi)(z-1)+z \nonumber\\
&+\frac{1}{12}-\ln({\mathcal A})-\frac{1}{12z}+\Phi(z-1)-(z-1)\Phi'(z-1),
\end{align}
where ${\mathcal A}=1.282427\dots$ is the Glaisher--Kinkelin constant.  
In \cite[Proposition 2]{Kuznetsov_2022} it was shown that for all $z$ in the half-plane $\re(z) \ge 3/2$ 
\begin{equation}\label{gamma_G_approximations}
 \big|\ln(\Gamma(z))-\ln(\widehat \Gamma(z))\big| \le  \epsilon_1, \;\;\; 
 \big|\ln(G(z))-\ln(\widehat G(z))\big| \le  \epsilon_2, 
\end{equation}
where $\epsilon_i=\Vert \eta_i \Vert_{\infty}=\sup\{ |\eta_i(x)| : x\ge 0\}$
and the functions $\eta_i$ are defined as follows: 
\[
 \eta_1(x):=2x^2 \big(f(x)-\phi(x)\big), \;\;\; 
 \eta_2(x):=6x\big(f(x)-\phi(x)\big)+2x^2\big(f'(x)- \phi'(x)\big).
\]

Thus, if we can find an exponential sum approximation $\phi$ to $f$ (given by \eqref{def:f(x)}) for which $\epsilon_i$ are small, then we obtain easily computable approximations to the logarithms of the gamma function and the Barnes $G$-function in the half-plane $\re(z)\ge 3/2$. Using functional equations and reflection formulas, these approximations can then be extended to compute the logarithms of $\Gamma(z)$ and $G(z)$ throughout the cut plane $\c \setminus (-\infty,0]$; see \cite{Kuznetsov_2022} for details.  

\begin{figure}[t]
\centering
\subfloat[][$M=12$, $n_{\infty}=2$, $A=2$, $B=3.5$]{\label{fig_3a}\includegraphics[height =6.5cm]{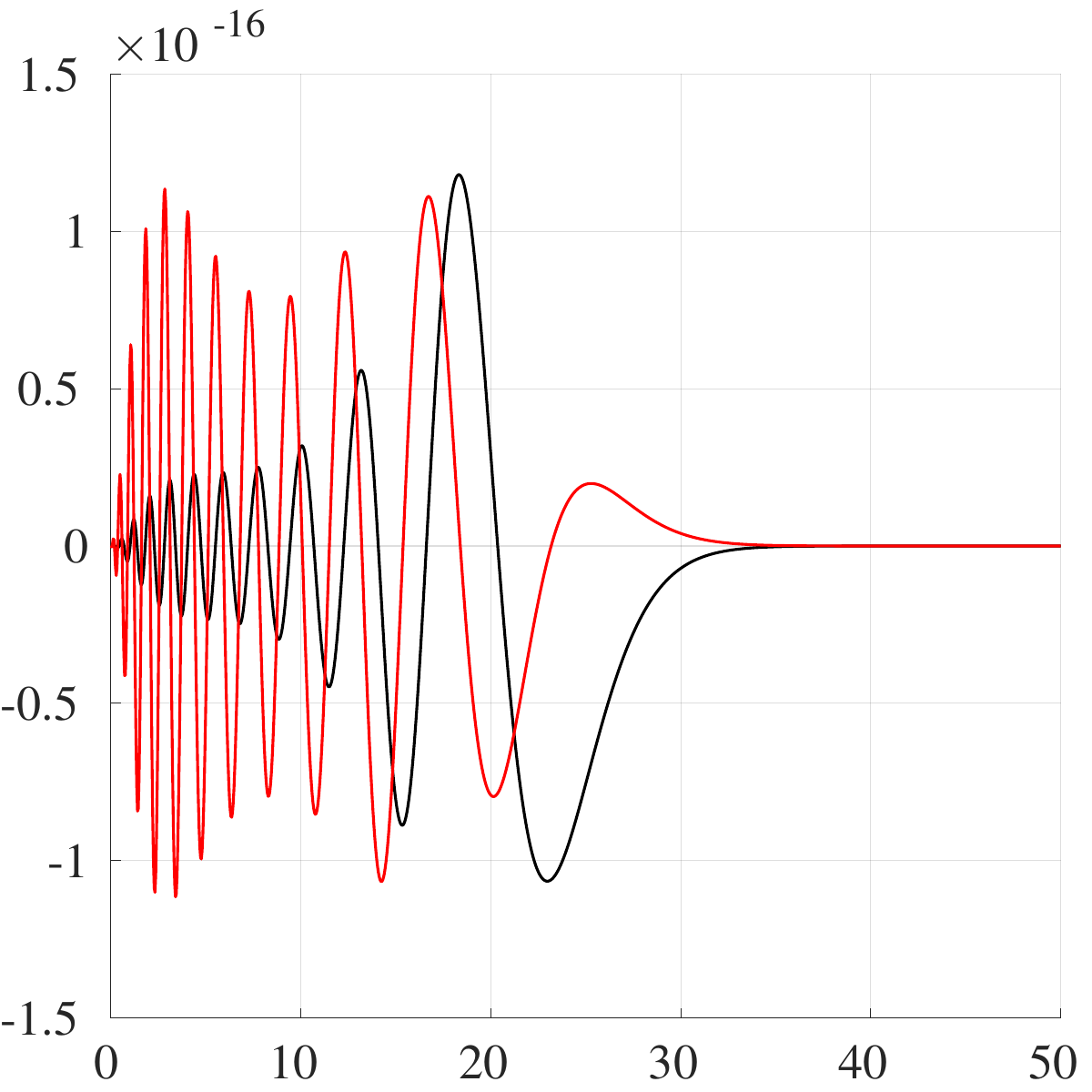}} 
\hspace{0.75cm}
\subfloat[][$M=30$, $n_{\infty}=4$, $A=4.25$, $B=5$]{\label{fig_3b}\includegraphics[height =6.5cm]{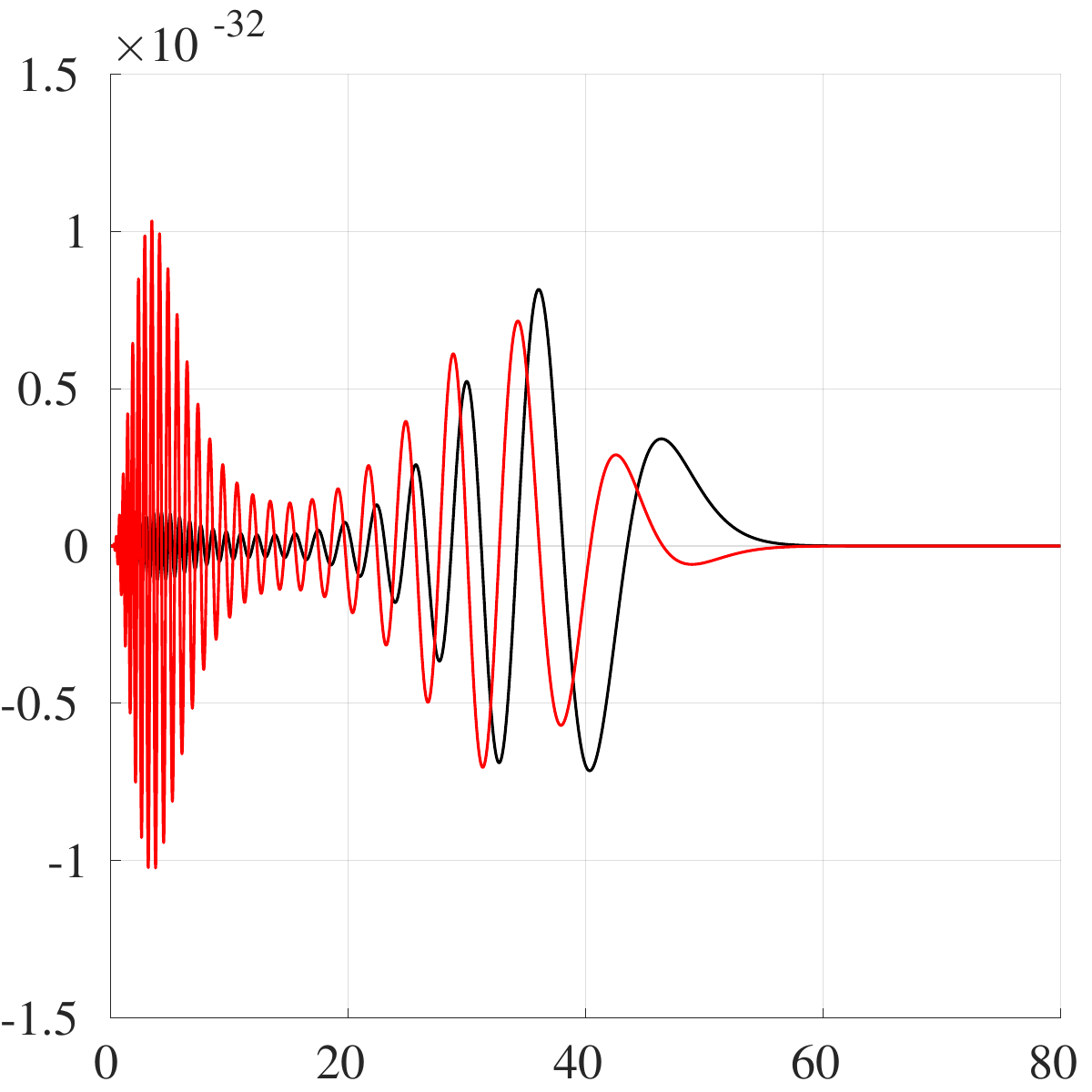}}   
\caption{The graphs of $\eta_1(x)$ (black) and $\eta_2(x)$ (red) for different values of $M$, $n_{\infty}$, $A$, and $B$.} 
\label{fig3}
\end{figure} 

The paper \cite{Kuznetsov_2022} employed a two-point Pad\'e approximation (interpolating at $0$ and $\infty$) to obtain two exponential sum approximations to $f$. The first approximation had $M=15$ terms and produced $\epsilon_1 \approx 9\times 10^{-17}$ and $\epsilon_2 \approx 2.8\times 10^{-16}$. According to \eqref{gamma_G_approximations}, this guarantees double-precision accuracy for computing $\ln(\Gamma(z))$ and $\ln(G(z))$ via their approximations in \eqref{eq:ln_Gamma_approx} and 
\eqref{eq:ln_G_approx} in the half-plane $\re(z) \ge 3/2$.  
We applied the algorithm presented in this paper (for which we had to compute the values of $F(z_j)$ via the double-exponential quadrature \eqref{double_exp_quadrature}), and we found an exponential sum $\phi$ with only $M=12$ terms, yielding $\epsilon_1, \epsilon_2 \approx 1.2 \times 10^{-16}$ (see Figure~\ref{fig_3a}). Thus, we achieve the same double-precision accuracy in approximating $\ln(\Gamma(z))$ and $\ln(G(z))$ as in \cite{Kuznetsov_2022} with 20\% fewer terms, which is significant since it leads to faster evaluation of $\ln(\Gamma(z))$ and $\ln(G(z))$.  
The second exponential sum approximation that was obtained in \cite{Kuznetsov_2022} had $M=45$ terms and produced $\epsilon_1 \approx 5 \times 10^{-32}$ and $\epsilon_2 \approx 2.9\times 10^{-31}$, providing nearly quadruple-precision accuracy for computing $\ln(\Gamma(z))$ and $\ln(G(z))$ via \eqref{eq:ln_Gamma_approx} and \eqref{eq:ln_G_approx}. Using our new method, we obtained an exponential sum $\phi$ with $M=30$ terms and $\epsilon_1, \epsilon_2 \approx  10^{-32}$ (see Figure~\ref{fig_3b}), thus achieving the same accuracy with 33\% fewer terms. Using these exponential sum approximations we wrote MATLAB/Python/Fortran functions for computing the logarithm of the gamma function in the entire complex plane, accurate to double and quadruple precision -- these can be downloaded at \href{https://github.com/Alexey-Kuznetsov-math}{github.com/Alexey-Kuznetsov-math}. MATLAB code for computing the logarithm of the Barnes $G$-function in the entire complex plane can be downloaded \href{https://www.mathworks.com/matlabcentral/fileexchange/182574-ln_barnes_g}{here}.

\subsection{Approximating the probability density function of the Gompertz--Makeham distribution}\label{subsection_Gompertz}

Consider the probability density function $f$ of the Gompertz-Makeham law of mortality:
\begin{equation*}
f(x)=(a+bc^{x_0+x})\exp\!\left(-ax-\frac{b}{\ln(c)}c^{x_0} (c^x-1)\right), \qquad x\ge 0. 
\end{equation*}
A particular instance of this distribution with parameters $x_0=65$, $a=0.0007$, $b=0.00005$, and $c=10^{0.04}$ was studied in \cite{Feng_2017, Feng_2019}, where the authors used the Beylkin-Monz\'on method \cite{Beylkin_2005} to approximate $f$ by exponential sums. Their approximation employed $M=15$ terms and achieved an $L_{\infty}$ error of approximately $\|f-\phi\|_{\infty} \approx 10^{-6}$.  

Using our algorithm with $M=14$ terms, we obtained an approximation with a smaller error, $\|f-\phi\|_{\infty} \approx 1.5\times 10^{-7}$.  Furthermore, with $M=28$ terms we achieved $\|f-\phi\|_{\infty} \approx 2.2\times 10^{-12}$.   
Since the Laplace transform of $f$ is not known in closed form, we computed the values of $F(z_j)$ via the double-exponential quadrature \eqref{double_exp_quadrature}. The density function $f$ and the corresponding errors $f-\phi$ are shown in Figure~\ref{fig4}.

\begin{figure}[t]
\centering
\subfloat[][Gompertz--Makeham pdf $f(x)$]{\label{fig_4a}\includegraphics[width =5.45cm]{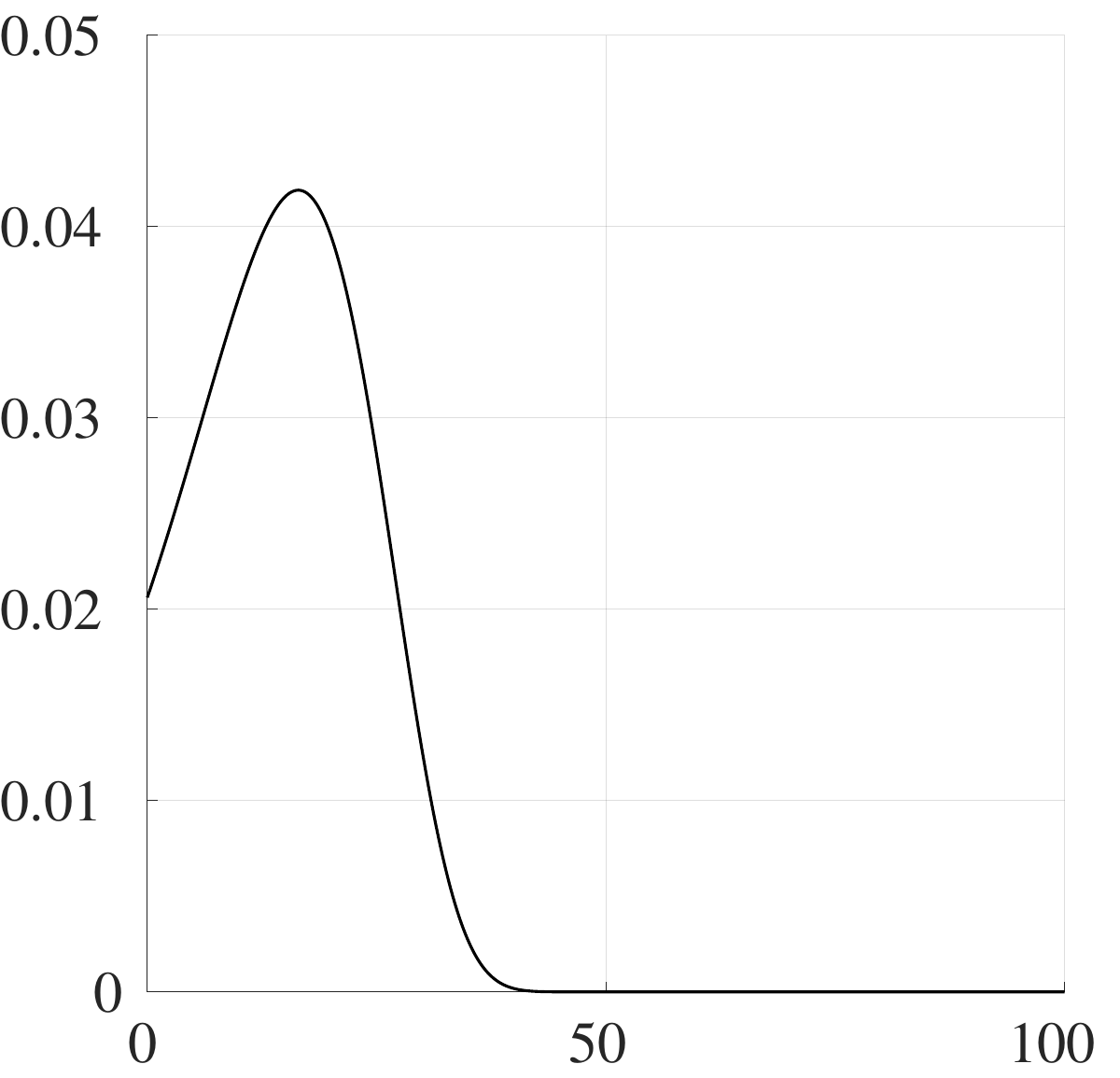}} 
\hspace{0.15cm}
\subfloat[][$M=14$, $n_{\infty}=2$, $A=0.1$, $B=0.9$]{\label{fig_4b}\includegraphics[width =5.5cm]{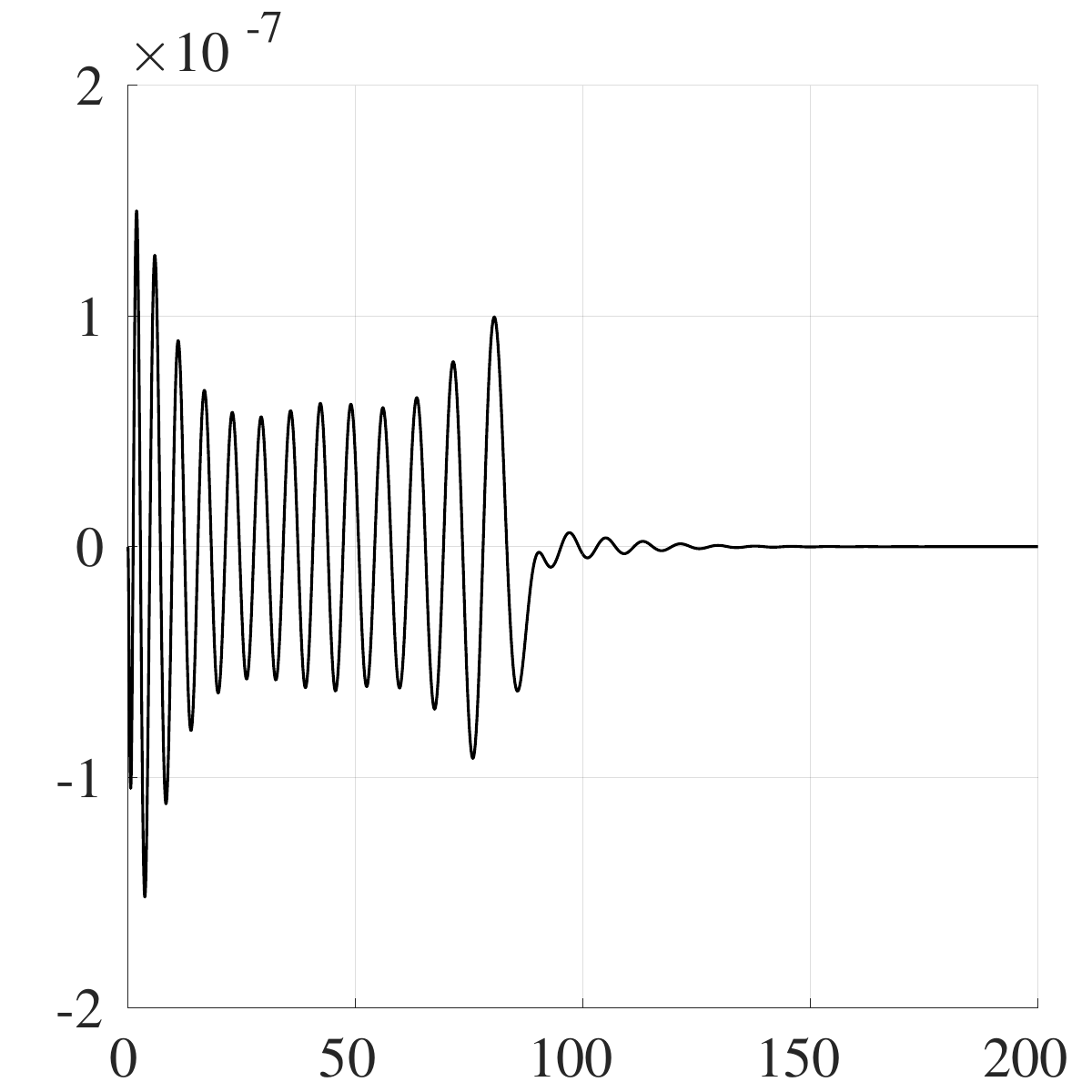}}   
\hspace{0.15cm}
\subfloat[][$M=28$, $n_{\infty}=4$, $A=0.12$, $B=1.65$]{\label{fig_4c}\includegraphics[width =5.5cm]{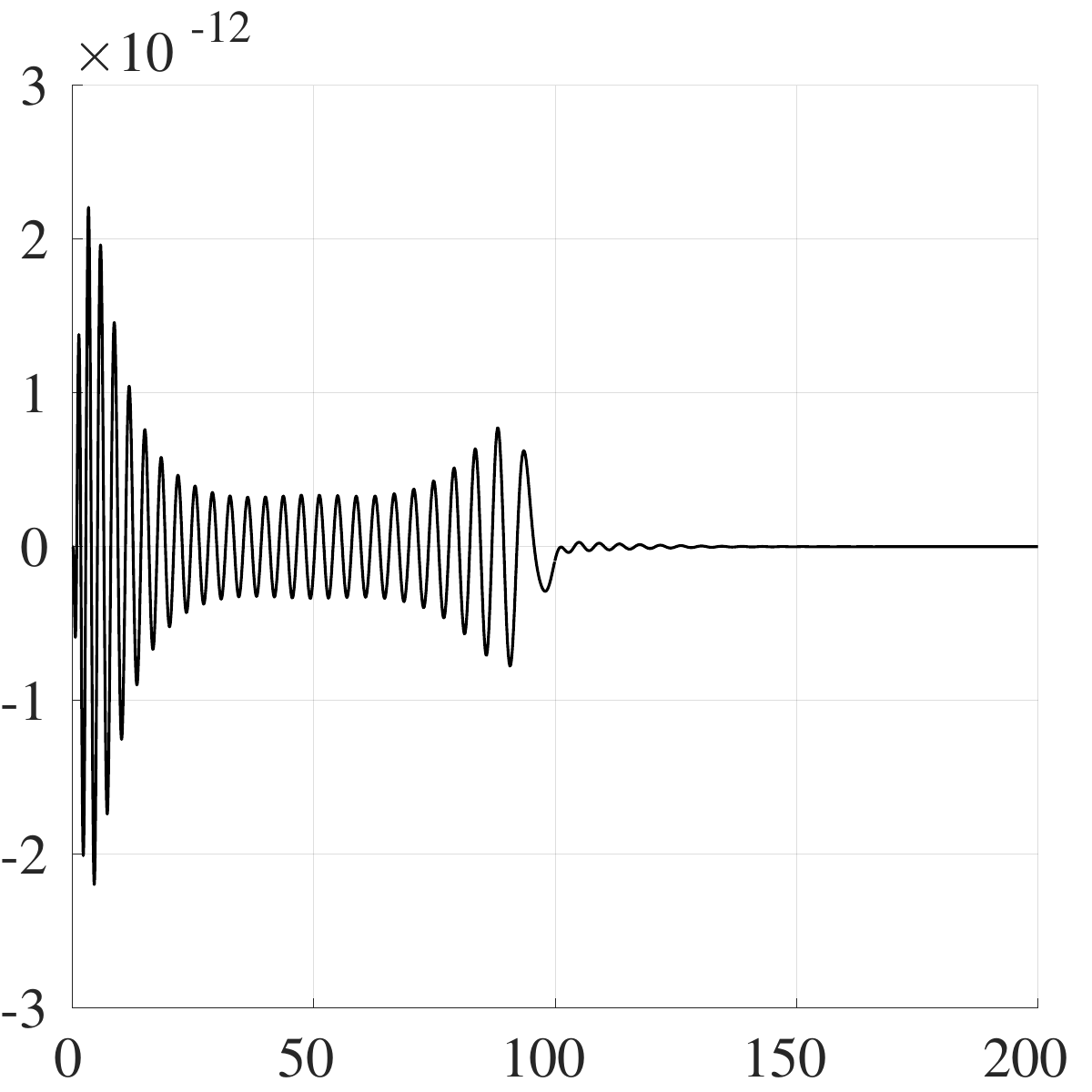}}  
\caption{Approximating the Gompertz--Makeham probability density function. Subplots (b) and (c) show the error $f(x)-\phi(x)$ for exponential sum approximations with $14$ and $28$ terms, respectively.} 
\label{fig4}
\end{figure}

Thus, in this example, our method improves upon the Beylkin-Monz\'on approximation, achieving smaller errors with fewer terms. This improvement stems from the fact that our algorithm allows for better control of the approximation error $f(x)-\phi(x)$ for small values of $x$, through the parameter $n_{\infty}$.

\subsection{Approximating the density of the lognormal distribution}\label{subsection_Lognormal}

The probability density function of a lognormal distribution is given by
\[
g_{\sigma}(x):=\frac{1}{x \sigma \sqrt{2\pi}} 
\exp\!\left(-\frac{1}{2\sigma^2 }\ln(x)^2\right), \qquad x>0, 
\]
where $\sigma>0$ is a parameter. Graphs of $g_{\sigma}(x)$ for $\sigma \in \{0.5, 1, 1.5\}$ are shown in Figure~\ref{fig5}.  

A key property of this density is that all derivatives $g^{(j)}_{\sigma}(x)$ vanish as $x\to 0^+$. In particular, our continued fraction algorithm cannot be applied directly since $g_{\sigma}(0+)=0$. We address this difficulty by a simple modification: instead of approximating $g_{\sigma}(x)$, we approximate the function  
\[
f(x)=\int_x^{\infty} g_{\sigma}(y)\, \d y,
\]
by an exponential sum $\phi(x)$ of the form \eqref{def:phi}, and then define  
\begin{equation}\label{tilde_phi}
\widetilde \phi(x)=-\phi'(x)=\sum_{j=1}^M c_j \lambda_j e^{-\lambda_j x}
\end{equation}
as an exponential sum approximation to $g_{\sigma}(x)=-f'(x)$.  

Since $f(0)=1$ and $f'(x)=-g_{\sigma}(x)$, the coefficients $\xi_j$ in \eqref{eqn:f_at_zero} are given by  
\begin{equation}\label{xi_j_lognormal}
\xi_0=1, \qquad \xi_j=0 \;\; \text{for all $j \ge 1$}.
\end{equation}
The Laplace transform of $f$ can be expressed in terms of the Laplace transform of $g_{\sigma}$:
\begin{equation}\label{F_in_terms_of_G}
F(z)=\int_0^{\infty} e^{-z x} f(x)\, \d x=\frac{1}{z}\big(1-G(z)\big),
\end{equation}
where 
\[
G(z):=\int_0^{\infty} e^{-zx} g_{\sigma}(x)\, \d x
\]
is the Laplace transform of $g_{\sigma}$. Formula \eqref{F_in_terms_of_G} follows directly from integration by parts.  

\begin{figure}[t]
\centering
\includegraphics[height =5.5cm]{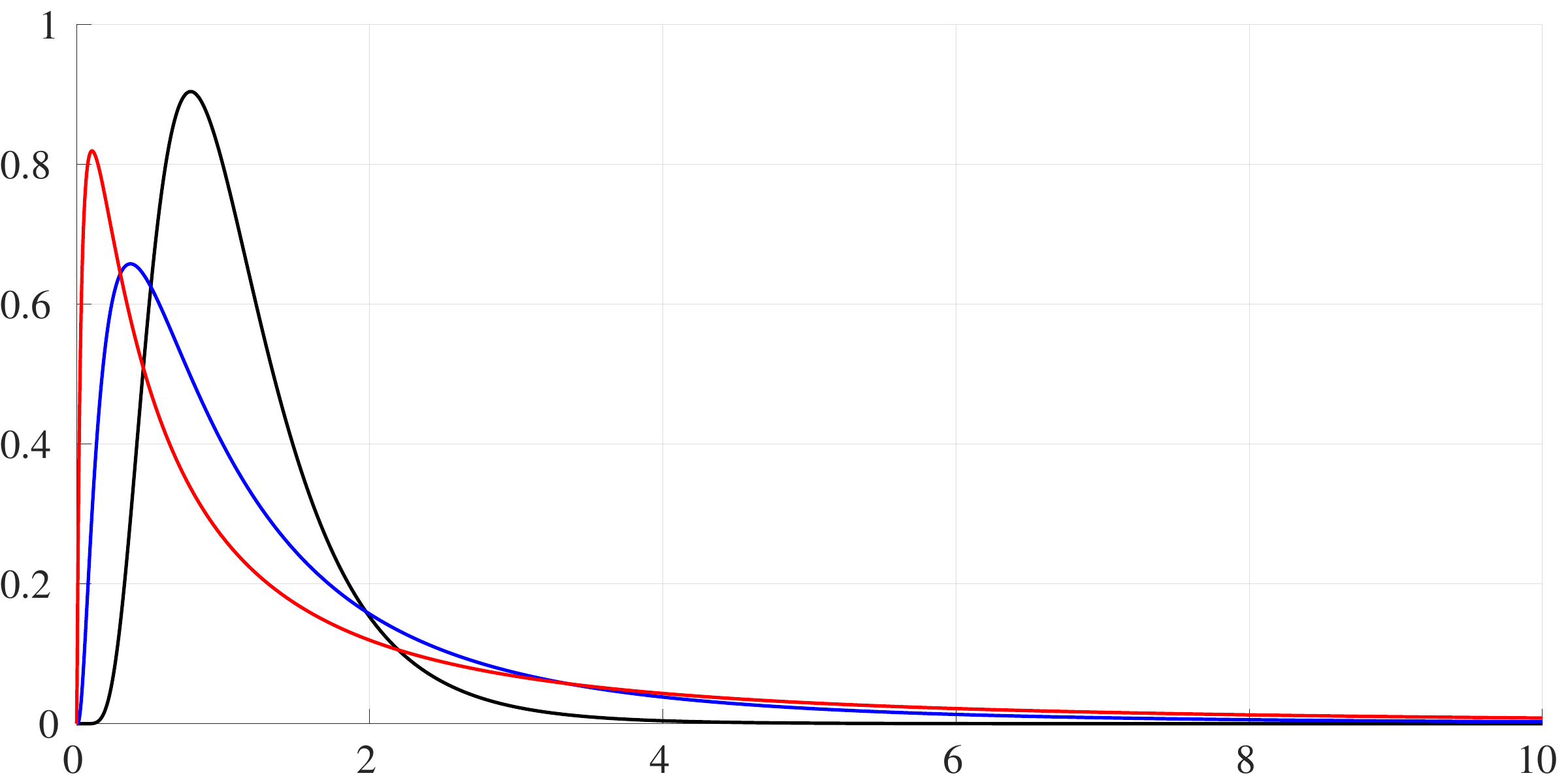} 
\caption{The lognormal pdf $g_{\sigma}(x)$ with $\sigma=0.5$ (black), $\sigma=1$ (blue), and $\sigma=1.5$ (red).} 
\label{fig5}
\end{figure}

Now we can summarize the algorithm for approximating the lognormal pdf $g_{\sigma}(x)$ by exponential sums.  
\begin{enumerate}
\item Compute the values of $G(z_j)$ numerically using the double exponential quadrature \eqref{double_exp_quadrature}.  
\item Use \eqref{F_in_terms_of_G} to obtain the corresponding values of $F(z_j)$.  
\item Define the coefficients $\xi_j$ as in \eqref{xi_j_lognormal}.  
\item Run our continued fraction algorithm to compute the coefficients $c_j$ and $\lambda_j$ of $\phi(x)$.  
\item Finally, form $\widetilde \phi(x)$ according to \eqref{tilde_phi}, which gives the exponential sum approximation to $g_{\sigma}(x)$.  
\end{enumerate}

We constructed exponential sum approximations to $g_{\sigma}$ for $\sigma \in \{0.5,1,1.5\}$. The results are shown in Figure~\ref{fig6}. Note that the horizontal axis corresponds to $\ln(x)$. This logarithmic scale makes it easier to visualize the oscillations in the error $g_{\sigma}(x)-\widetilde \phi(x)$ that occur for very small values of~$x$.  

\begin{figure}[t!]
\centering
\subfloat[][$\sigma=0.5$, $A=1.4$, $B=21$]{\label{fig_6a}\includegraphics[height =5.5cm]{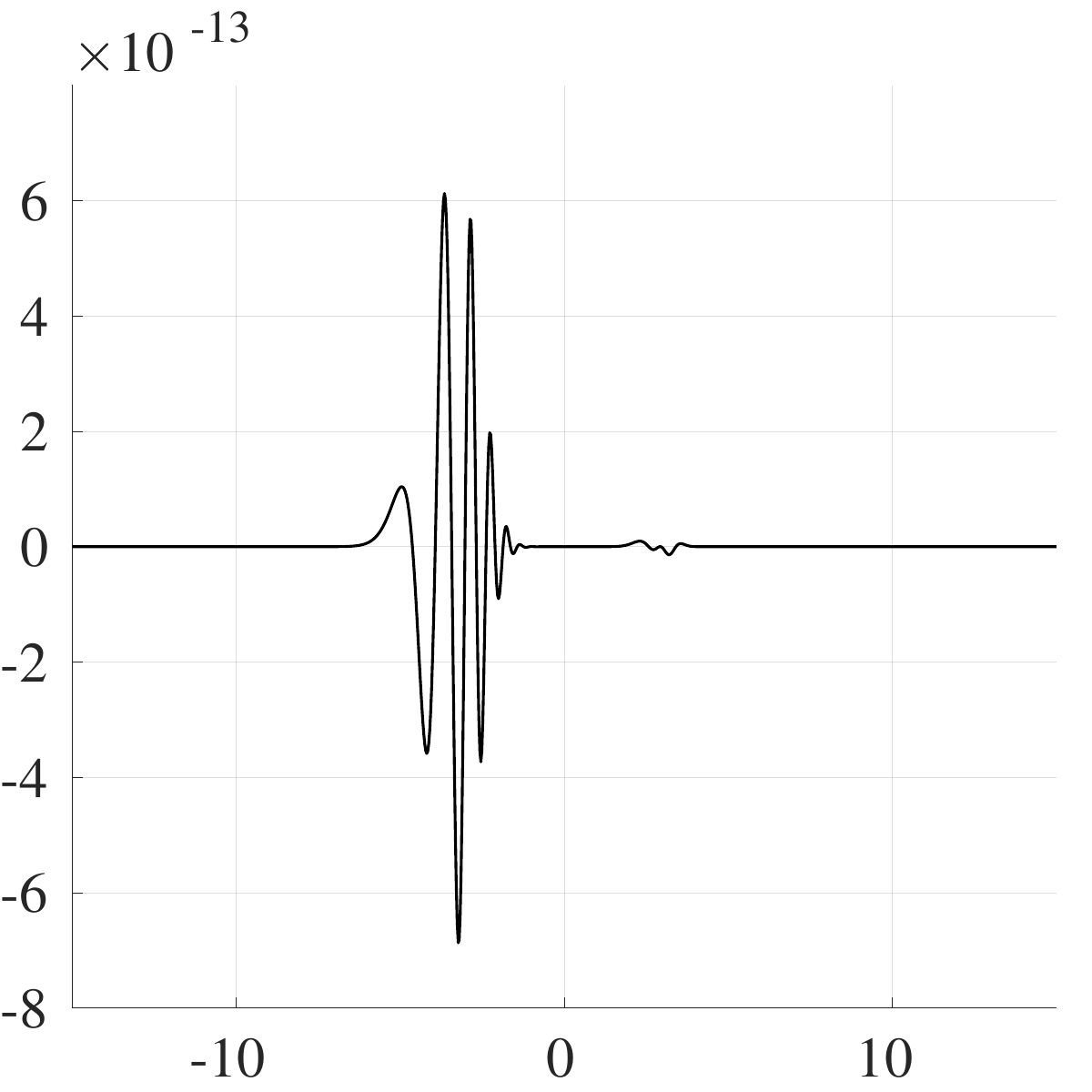}} 
\hspace{0.15cm}
\subfloat[][$\sigma=1$, $A=1.7$, $B=12$]{\label{fig_6b}\includegraphics[height =5.5cm]{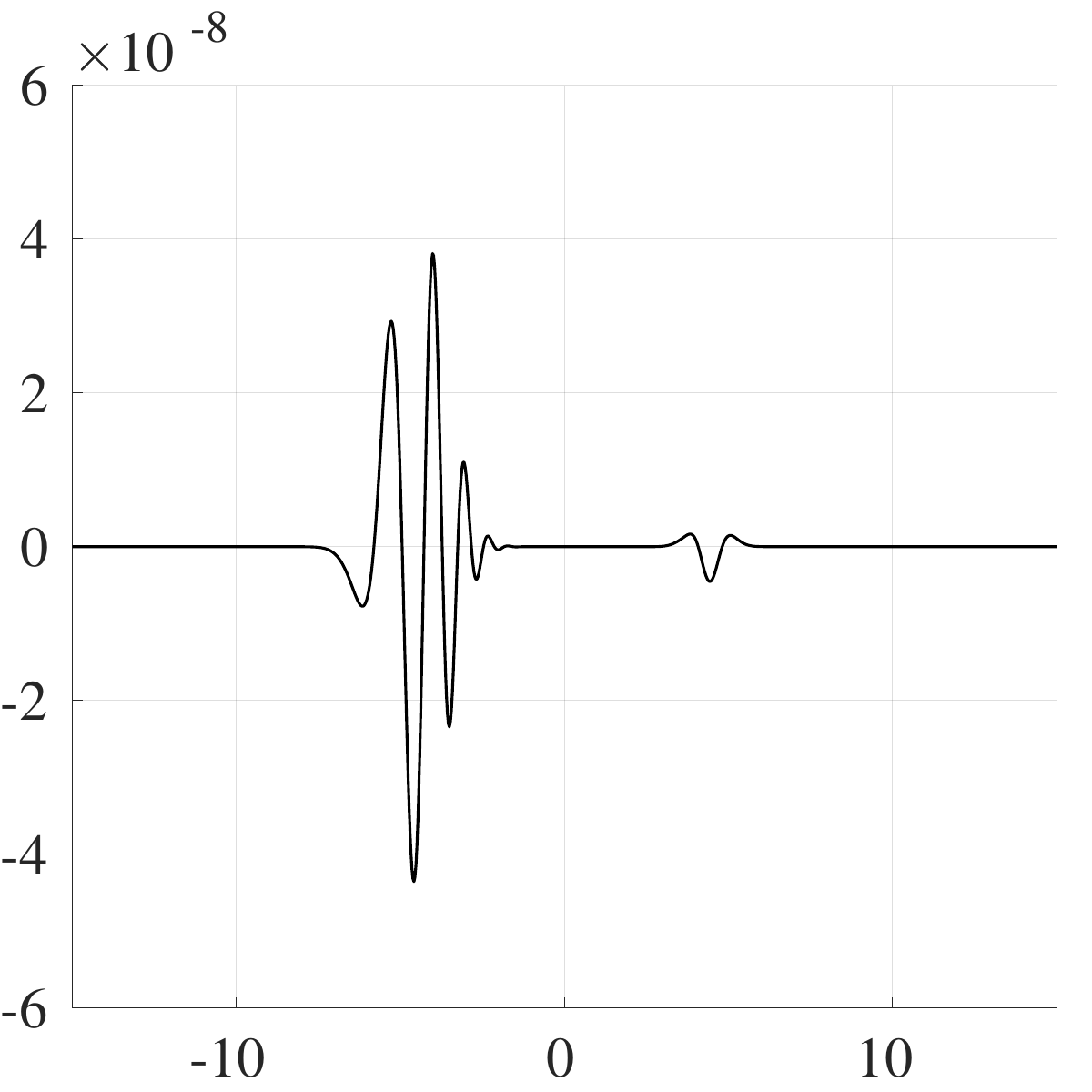}}    
\hspace{0.15cm}
\subfloat[][$\sigma=1.5$, $A=1.1$, $B=8.5$]{\label{fig_6c}\includegraphics[height =5.5cm]{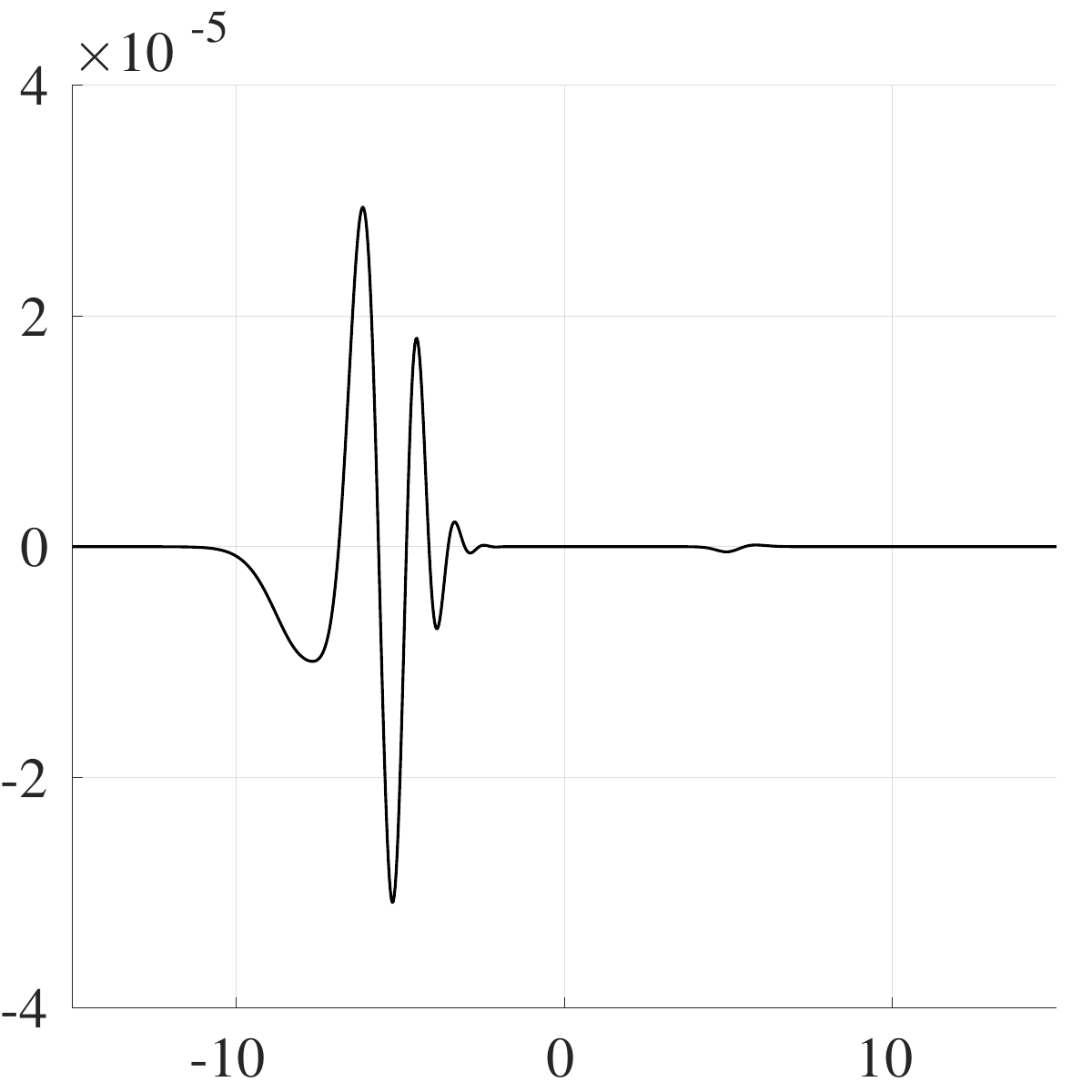}} 
\caption{The graphs of $g_{\sigma}(x)-\widetilde \phi(x)$ with respect to $\ln(x)$. We set $M=30$ in all cases, with $n_{\infty}=6$ in (a), (b), and $n_{\infty}=4$ in (c).} 
\label{fig6}
\end{figure}

\subsection{Approximating the hockey stick and the unit step functions}
\label{subsection_hockeystick}

\begin{figure}[t!]
\centering
\subfloat[][$h(x)$ (black) and $\phi(x)$ (red)]{\label{fig_7a}\includegraphics[height =6.5cm]{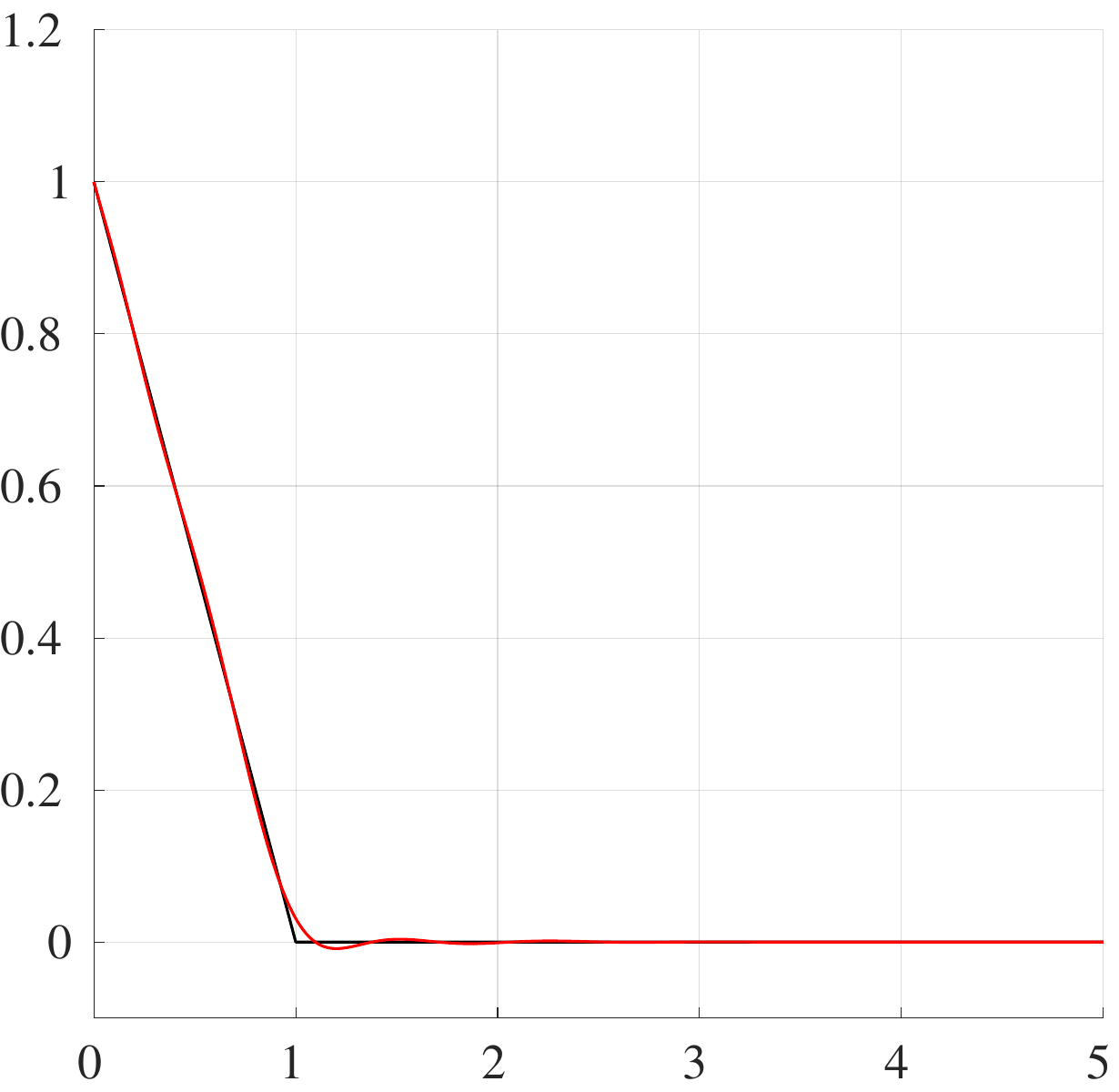}} 
\hspace{0.75cm}
\subfloat[][$h(x)-\phi(x)$]{\label{fig_7b}\includegraphics[height =6.5cm]{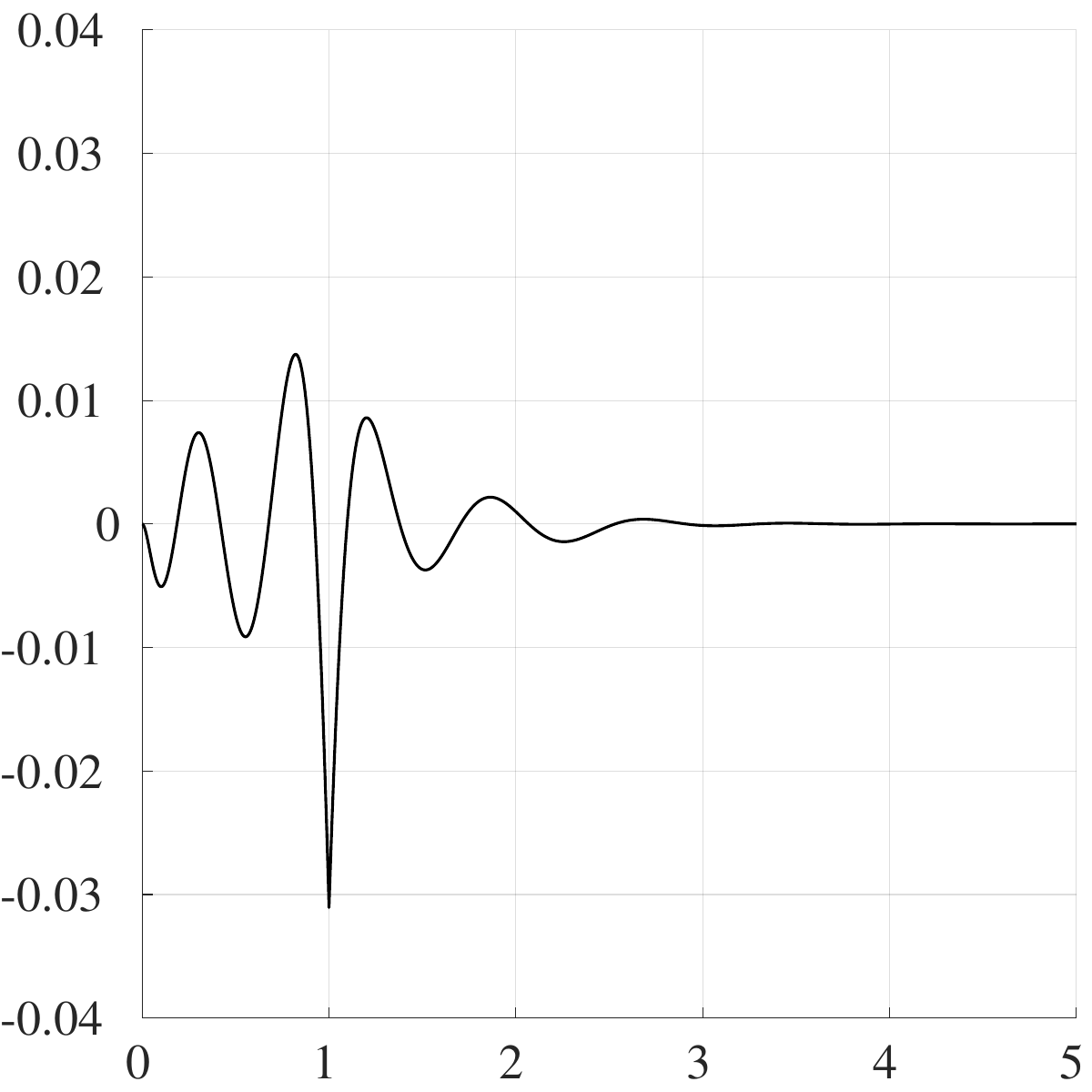}}    
\caption{The graphs of $h(x)$, $\phi(x)$, and $h(x)-\phi(x)$ for $M=5$, $n_{\infty}=2$, $A=0.5$, and $B=8.5$.} 
\label{fig7}
\end{figure}

The hockey stick function is defined as 
\[
h(x):=\max(1-x,0), \qquad x \ge 0.
\]
exponential sum approximations to this function were obtained in \cite{Iscoe_2010} using the Beylkin-Monz\'on algorithm, with applications to pricing financial derivatives such as collateralized debt obligations. This function is particularly well suited to our algorithm. Indeed, the coefficients $\xi_j$ are easily computed since $h(x)=1-x$ for $0\le x \le 1$, giving $\xi_0=1$, $\xi_1=-1$, and $\xi_j=0$ for all $j\ge 2$. The Laplace transform of $h$ is also available in closed form:
\[
F(z)=\int_0^{\infty} e^{-zx } h(x)\, \d x = \int_0^1 e^{-zx} (1-x)\, \d x= z^{-2}\big(e^{-z} + z - 1\big). 
\]

\begin{figure}[t]
\centering
\subfloat[][$M=15$, $n_{\infty}=2$, $B=39$]{\label{fig_8a}\includegraphics[height =5.5cm]{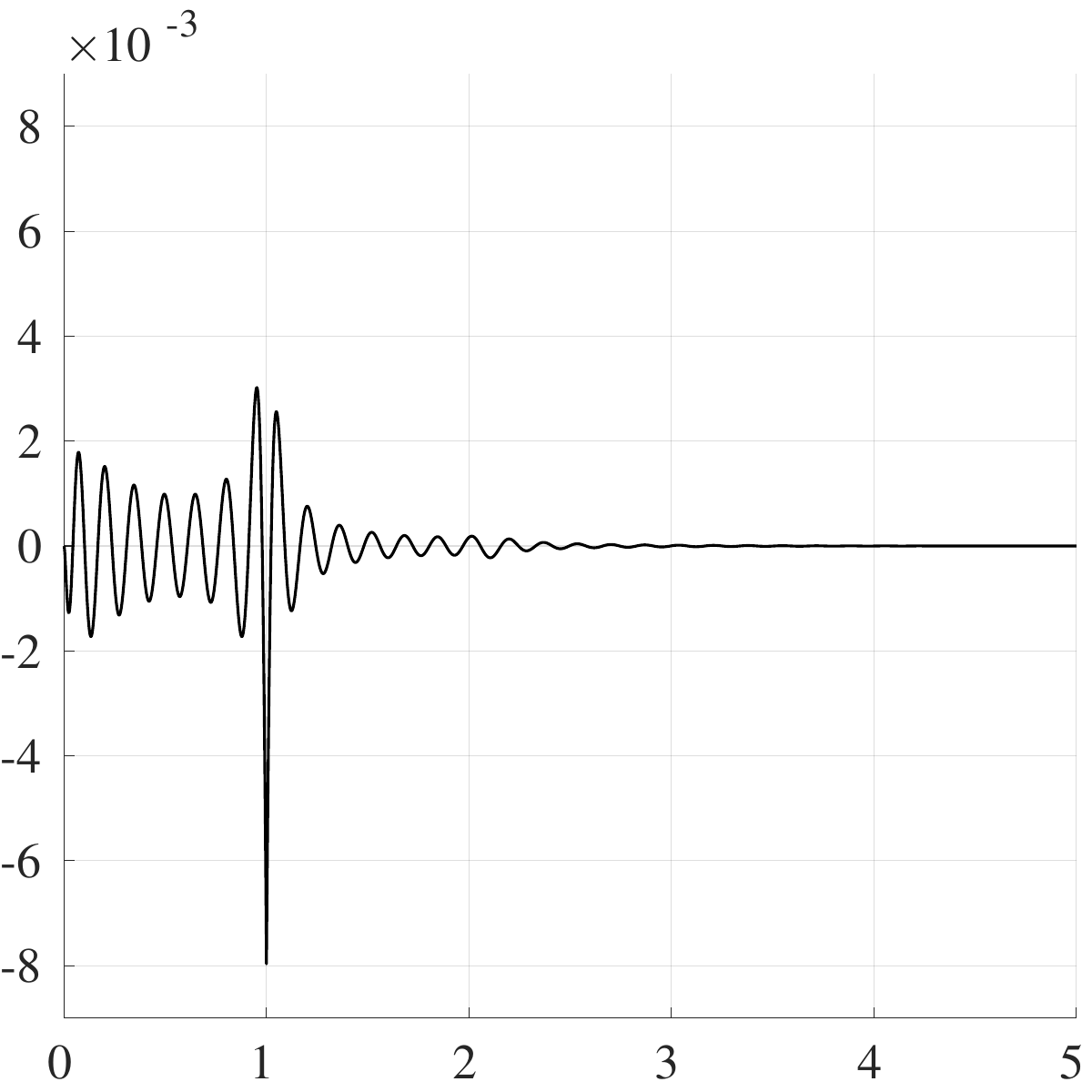}} 
\hspace{0.15cm}
\subfloat[][$M=30$, $n_{\infty}=4$, $B=78$]{\label{fig_8b}\includegraphics[height =5.5cm]{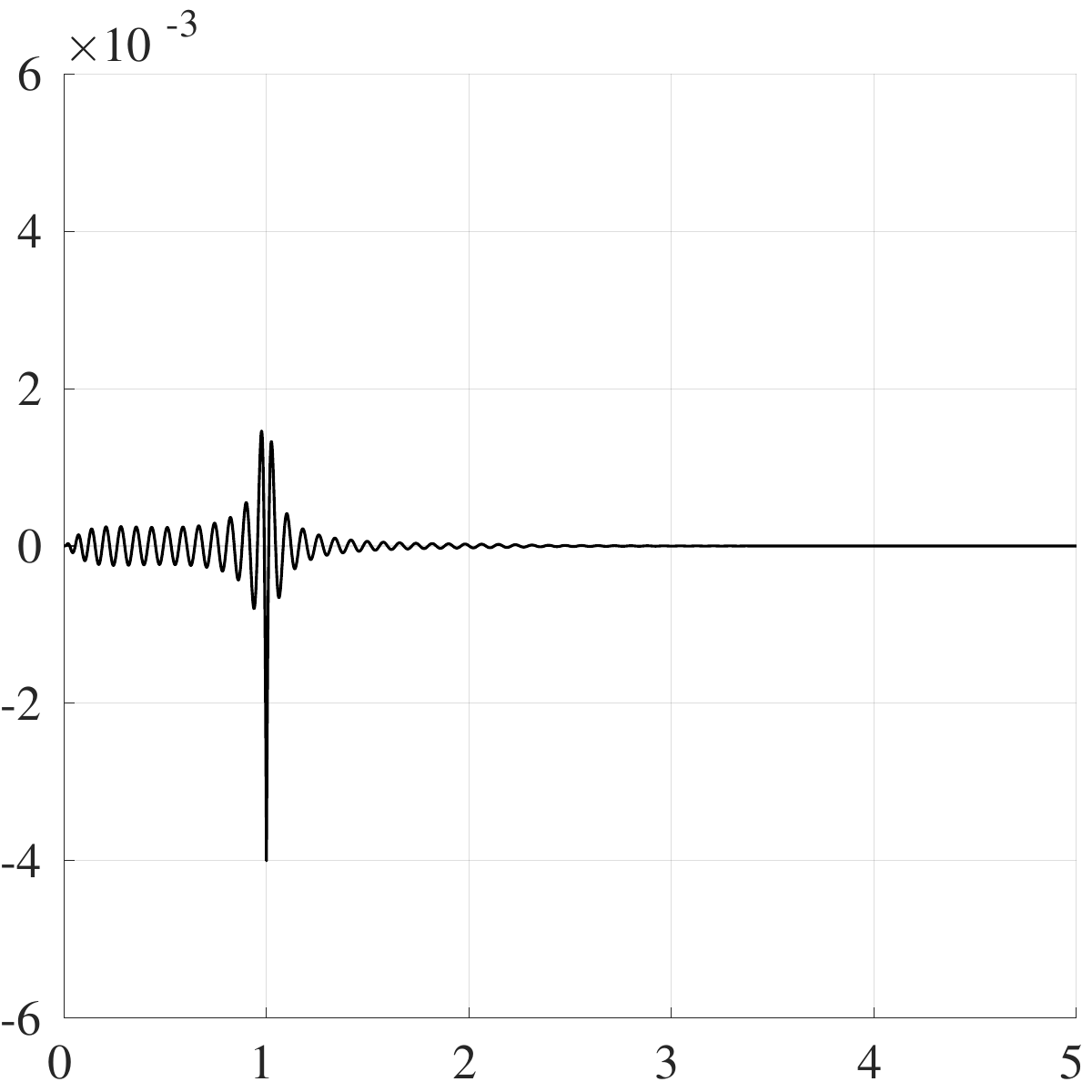}}    
\hspace{0.15cm}
\subfloat[][$M=60$, $n_{\infty}=6$, $B=160$]{\label{fig_8c}\includegraphics[height =5.5cm]{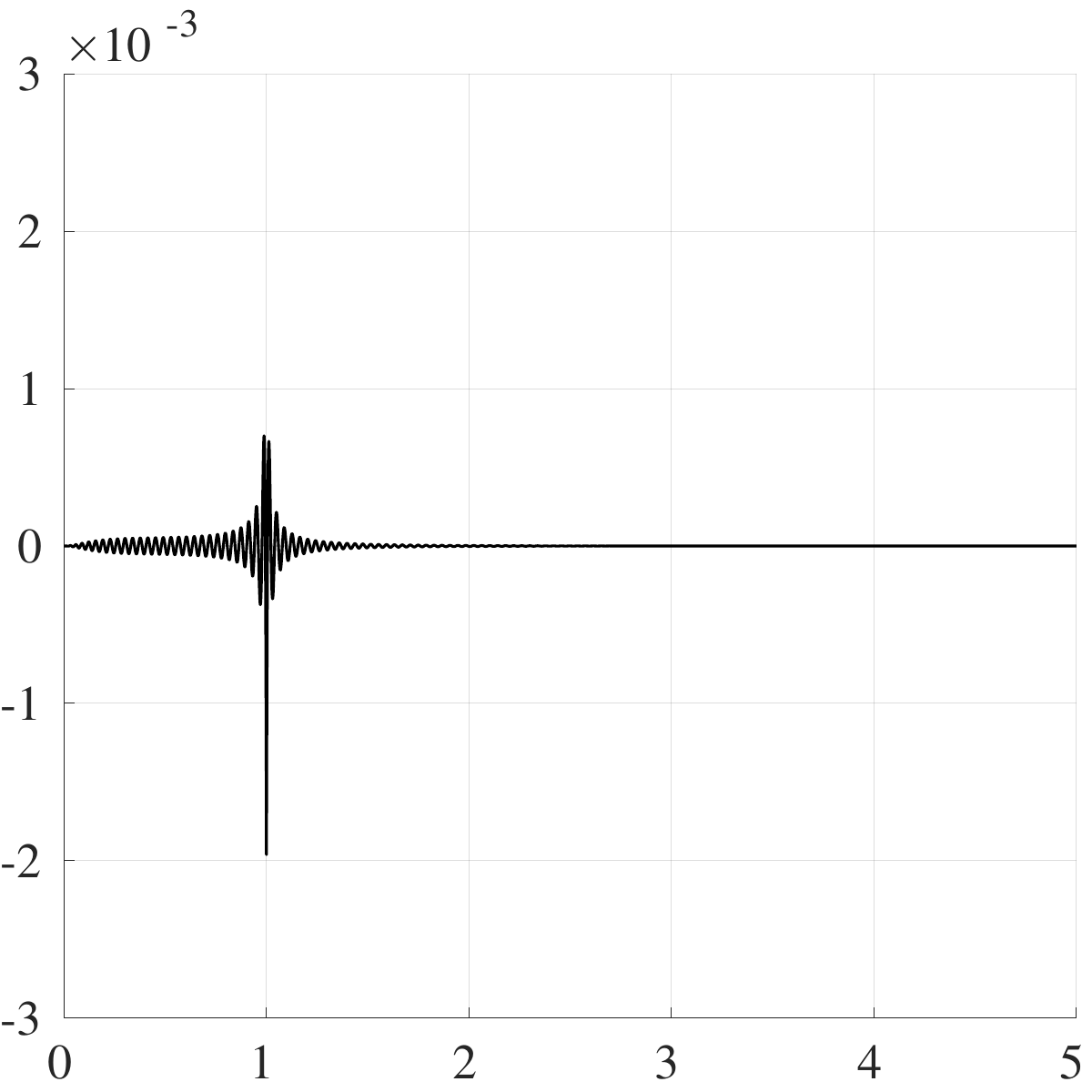}} 
\caption{The graphs of $h(x)-\phi(x)$ for different values of $M$, $n_{\infty}$, and $B$ ($A=0$ in all cases).} 
\label{fig8}
\end{figure}

Figure~\ref{fig7} shows the hockey stick function $h(x)$ and a 5-term exponential sum approximation $\phi(x)$ produced by our algorithm. We also computed approximations with $M\in \{15,30,60\}$ terms. In each case we optimized the parameters $\{A,B,n_{\infty}\}$ to minimize the $L_1$ error $\|h-\phi\|_1$. The errors and parameter choices are displayed in Figures~\ref{fig_8a}--\ref{fig_8c}. The $L_1$ errors for $M\in\{15,30,60\}$ were approximately $1.4\times 10^{-3}$, $3.4\times 10^{-4}$, and $9.7\times 10^{-5}$, respectively.  

A difficulty we encountered is that our algorithm sometimes produced exponential sums with very large coefficients $c_j$. For instance, with the parameters in Figure~\ref{fig_8b}, the largest coefficient $|c_j|$ is about $900$, while the approximation shown in Figure~\ref{fig_8c} had the largest value of $|c_j|$ close to $1.5\times 10^{6}$. Such large values are undesirable since they may cause loss of precision in applications of these approximations. By experimenting with the parameters, we found that increasing $B$ significantly reduces $|c_j|$ without substantially increasing the $L_1$ error. The updated results are shown in Figure~\ref{fig9}. With $M=30$, $n_{\infty}=4$, increasing $B$ from $78$ to $83$ reduced the maximum $|c_j|$ from $900$ to about $56.4$, while the $L_1$ error rose slightly from $3.4\times 10^{-4}$ to $3.8\times 10^{-4}$. For $M=60$, $n_{\infty}=6$, increasing $B$ from $160$ to $175$ reduced the maximum $|c_j|$ from $1.5\times 10^{6}$ to about $97$, while the $L_1$ error rose from $9.7\times 10^{-5}$ to $1.2\times 10^{-4}$. These results appear superior to those obtained in \cite{Iscoe_2010} using the Beylkin-Monz\'on method: in our case, the error is more localized and the $L_{\infty}$ errors are smaller.  

\begin{figure}[t]
\centering
\subfloat[][$M=30$, $n_{\infty}=4$, $B=83$]{\label{fig_9a}\includegraphics[height =6.5cm]{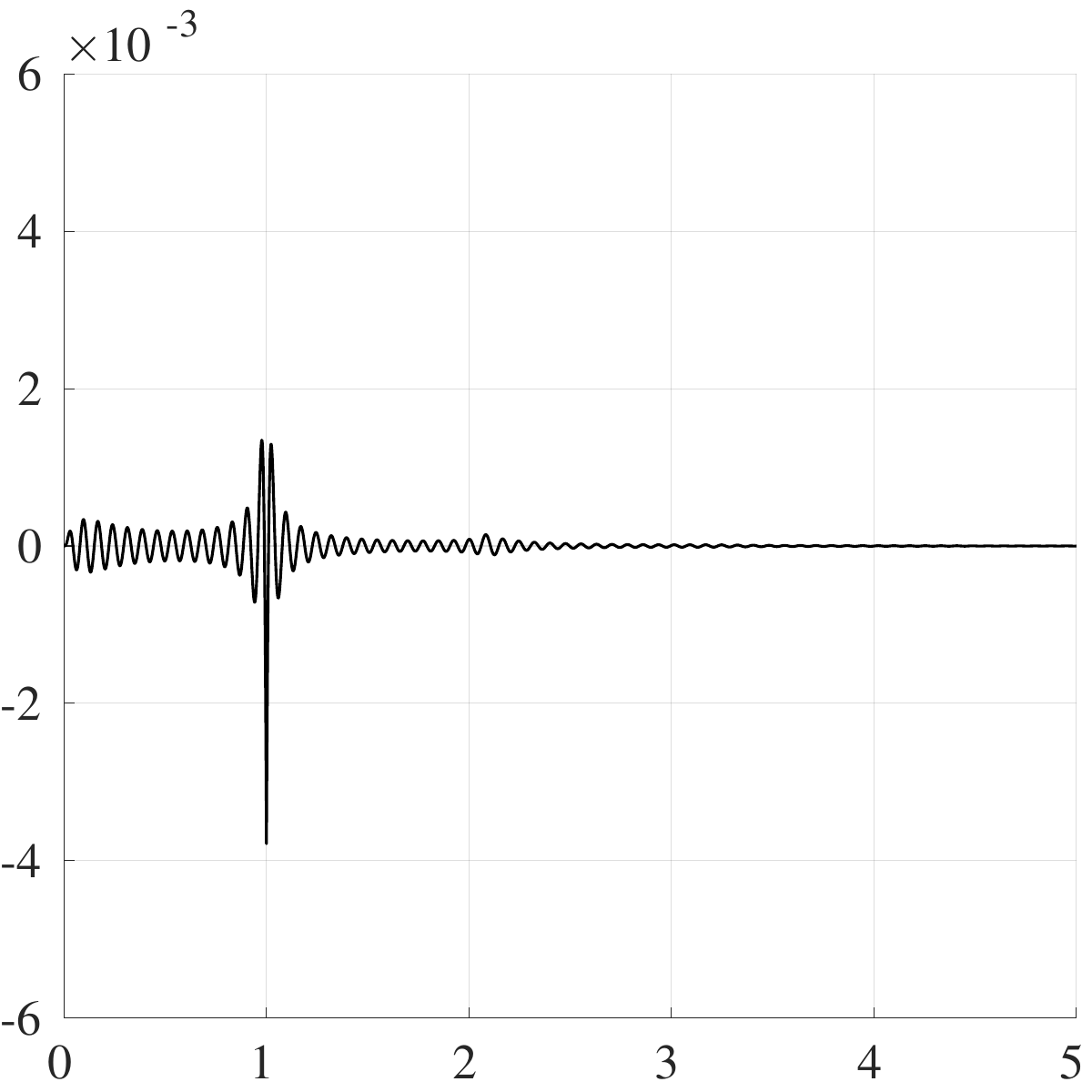}} 
\hspace{0.75cm}
\subfloat[][$M=60$, $n_{\infty}=6$, $B=175$]{\label{fig_9b}\includegraphics[height =6.5cm]{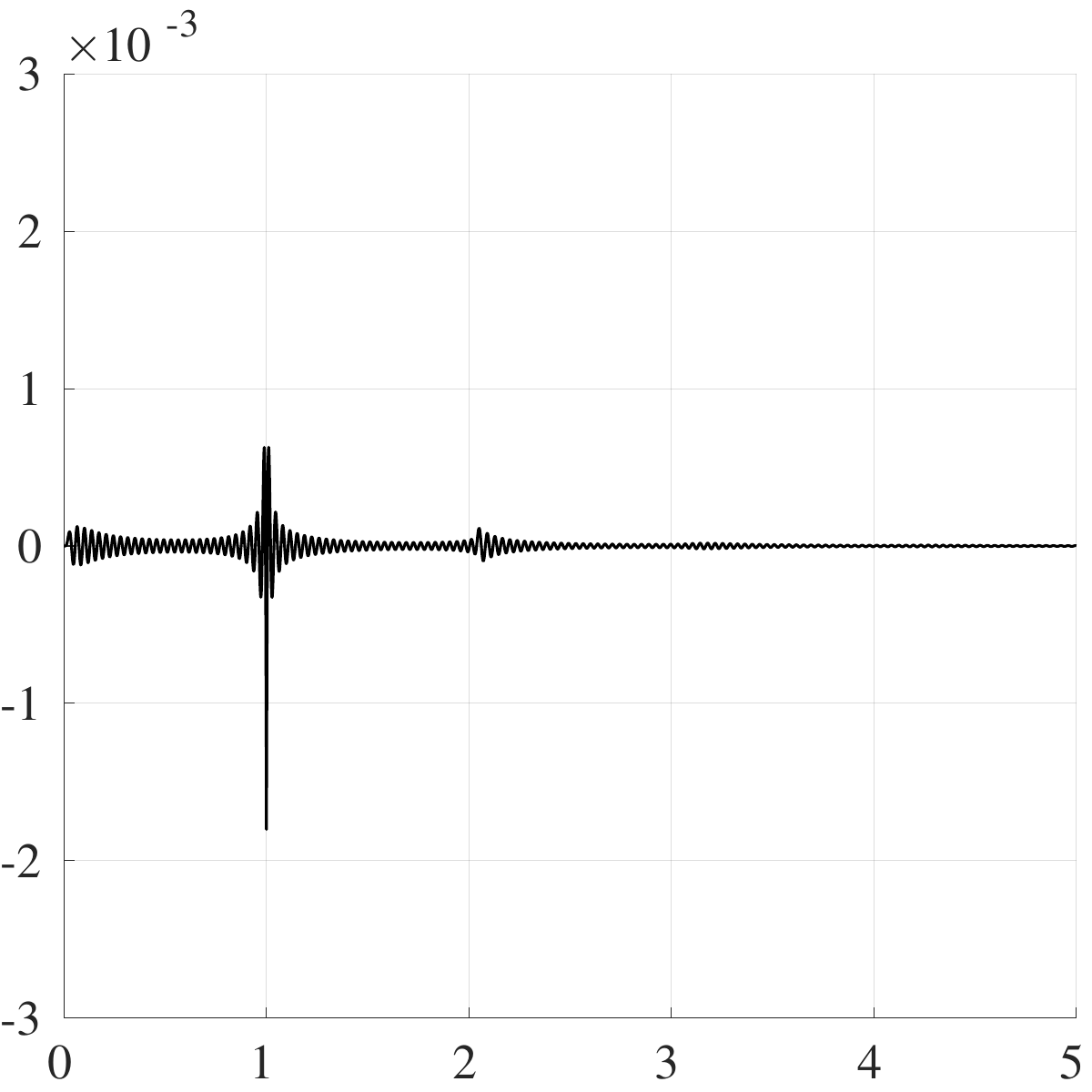}}    
\caption{The graphs of $h(x)-\phi(x)$ for different values of $M$, $n_{\infty}$, and $B$ ($A=0$ in all cases).} 
\label{fig9}
\end{figure}

Next, we consider the unit step function
\[
H(x):={\mathbf 1}_{\{x \le 1\}}, \qquad x\ge 0.
\]
Note that $H(x)=-h'(x)$, so exponential sum approximations to $h$ yield approximations to $H$ (as in Section~\ref{subsection_Lognormal}). However, since $H(0+)\neq 0$, our algorithm can also be applied directly to $H$, which we do in the following experiments.  The Laplace transform of $H(x)$ is easily seen to be equal to $(1-\exp(-z))/z$. 

Figure~\ref{fig_10a} shows $H(x)$ together with a 15-term exponential sum approximation $\phi(x)$, and Figure~\ref{fig_10b} shows the error $H(x)-\phi(x)$. Figures~\ref{fig_10c} and \ref{fig_10d} show the errors for approximations with $30$ and $60$ terms. In each case we set $A=0$ and chose $n_{\infty}$ and $B$ to minimize the $L_1$ error while keeping all coefficients $|c_j|<100$.  

\begin{figure}[t]
\centering
\subfloat[][$M=15$, $n_{\infty}=4$, $B=34$ ]{\label{fig_10a}\includegraphics[height =6.5cm]{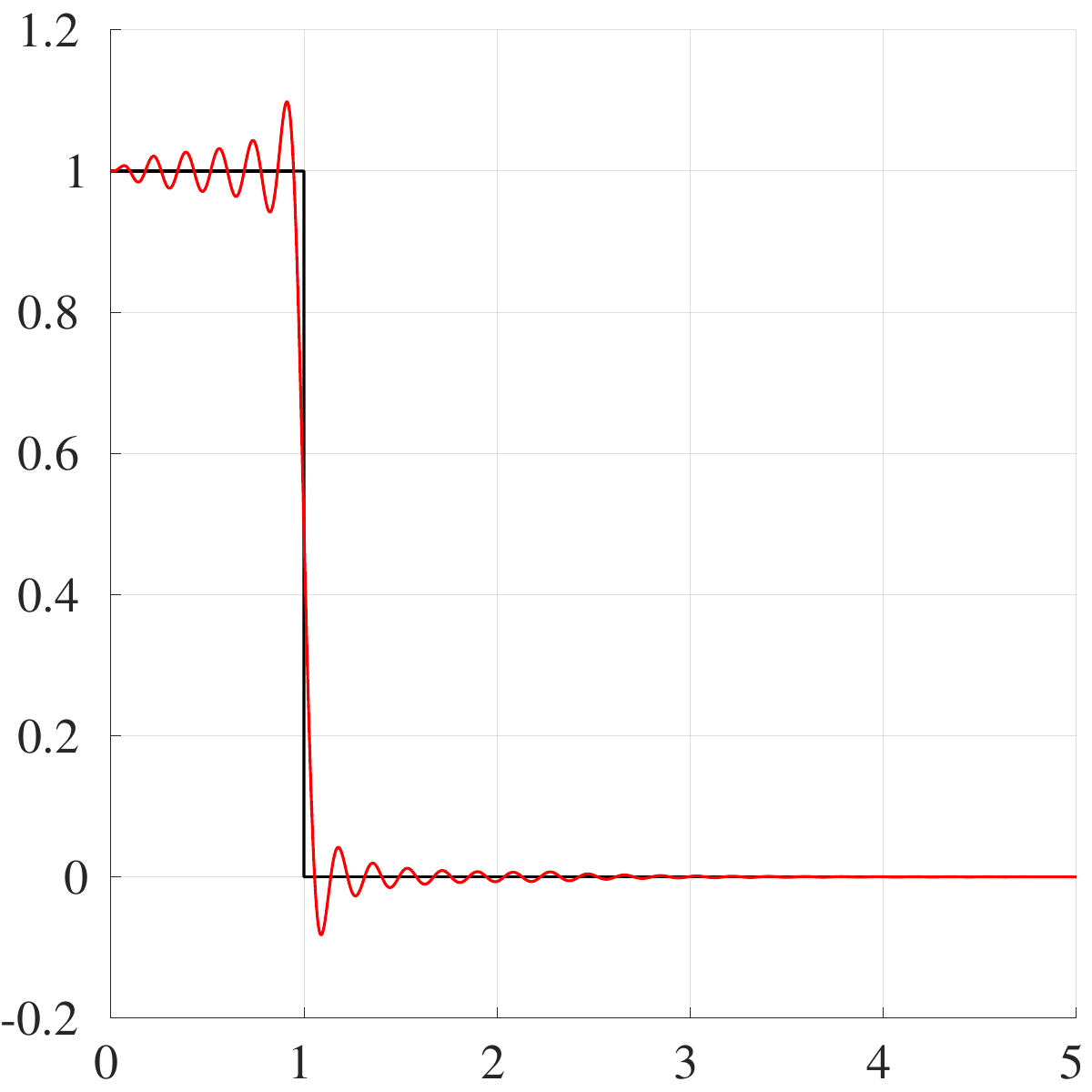}}
\hspace{0.75cm}
\subfloat[][$M=15$, $n_{\infty}=4$, $B=34$]{\label{fig_10b}\includegraphics[height =6.5cm]{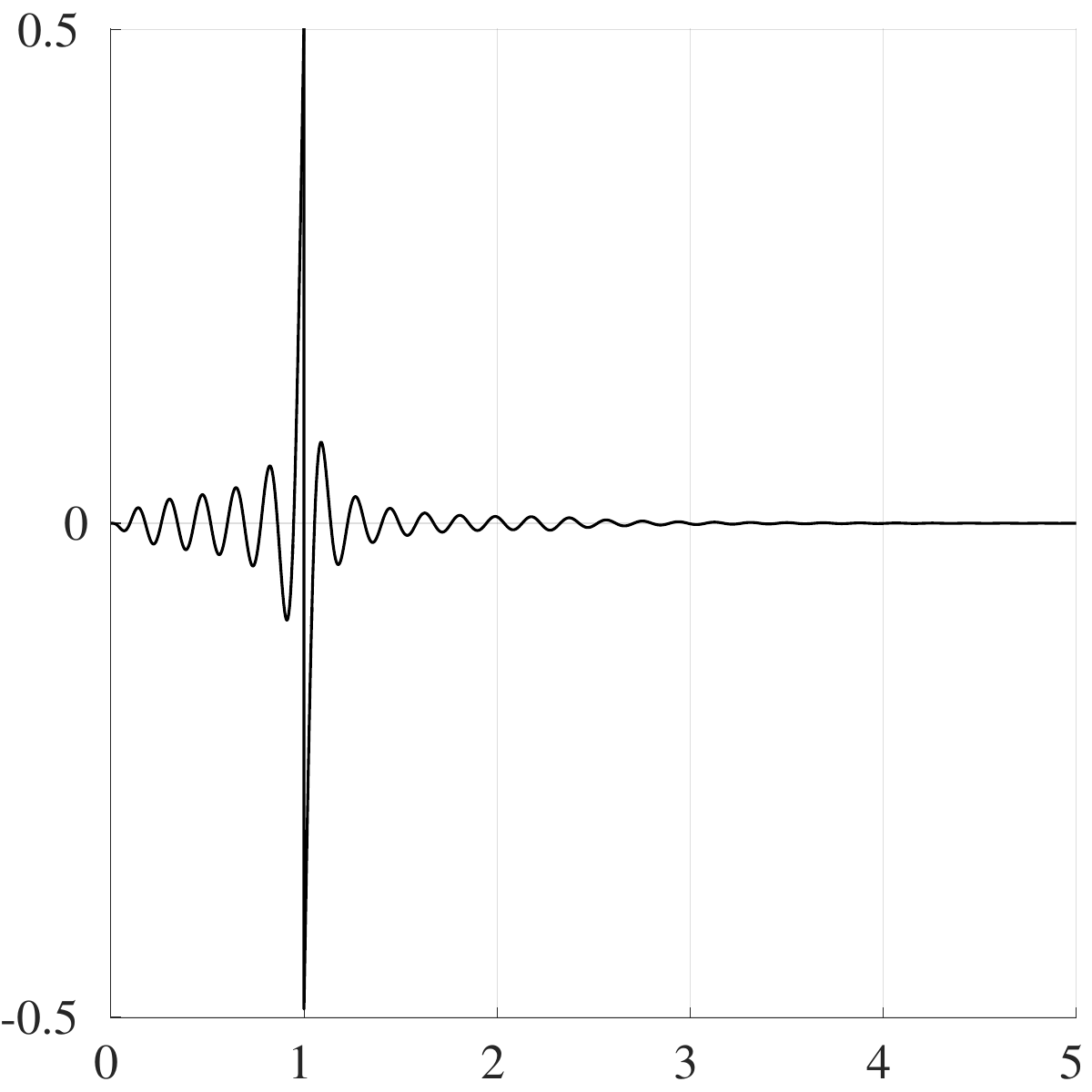}} 
\\
\subfloat[][$M=30$, $n_{\infty}=8$, $B=75.5$]{\label{fig_10c}\includegraphics[height =6.5cm]{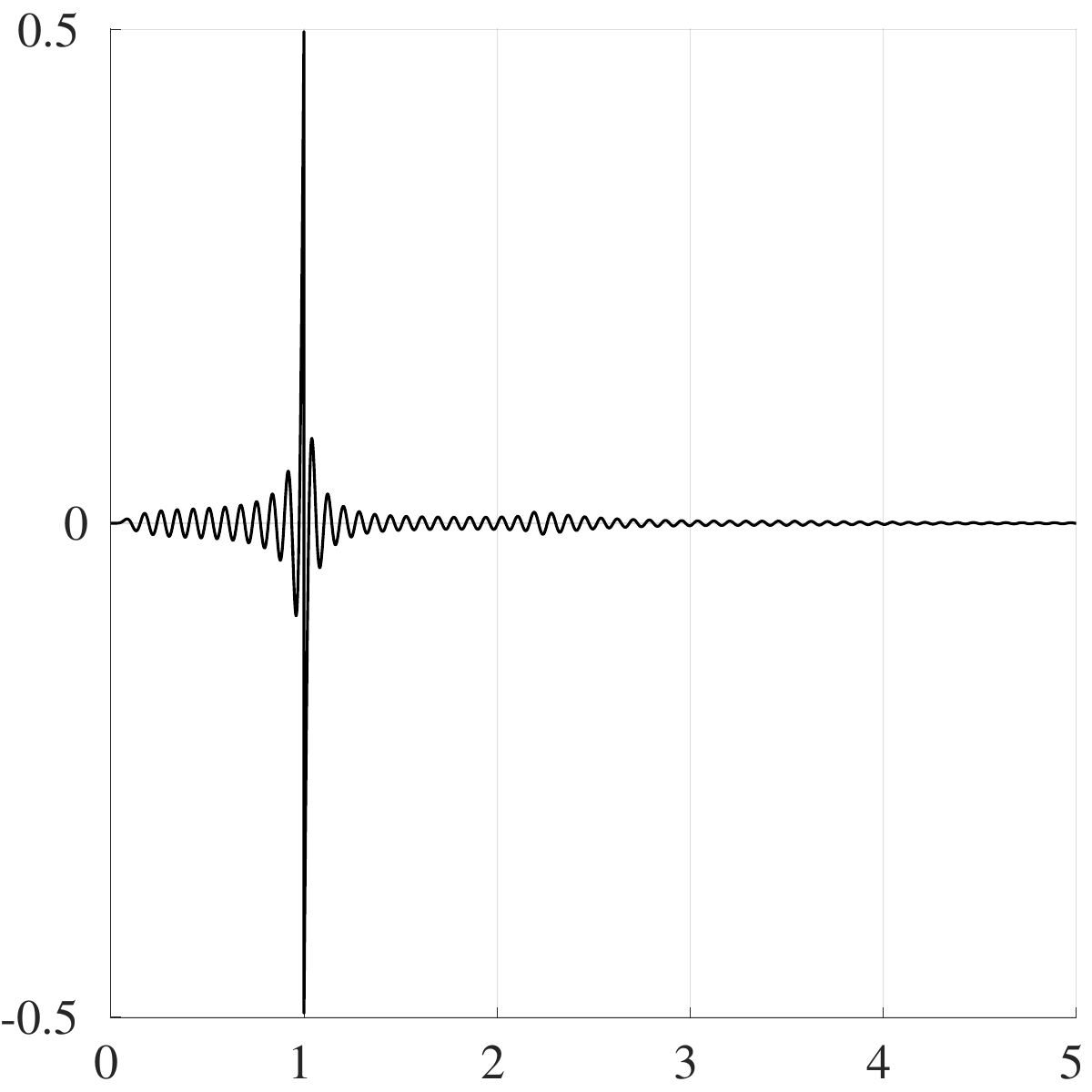}} 
\hspace{0.75cm}
\subfloat[][$M=60$, $n_{\infty}=16$, $B=158$]{\label{fig_10d}\includegraphics[height =6.5cm]{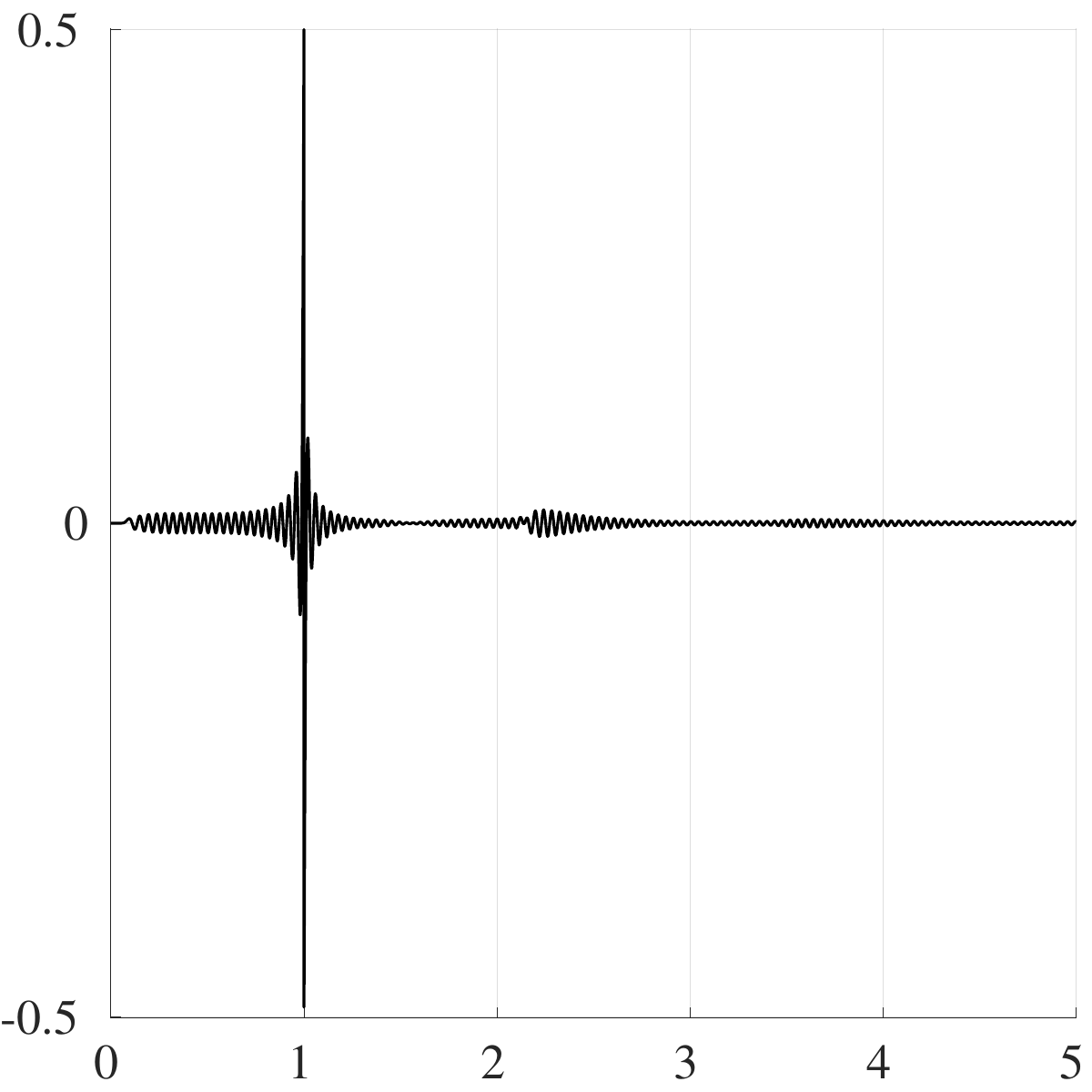}}  
\caption{(a) The unit step function $H(x)$ (black) and its exponential sum approximation $\phi(x)$ (red). (b)--(d) The errors $H(x)-\phi(x)$ for different values of $M$, $B$, and $n_{\infty}$. In all cases we set $A=0$.} 
\label{fig10}
\end{figure}

One possible application of exponential sum approximations to $H(x)$ arises in Laplace transform inversion. A relevant example from probability is the computation of the cumulative distribution function (CDF) of a positive random variable $X$ when only its Laplace transform $F_X(z)=\e[\exp(-zX)]$ is known. This situation occurs frequently, for instance, when computing the CDF of a sum of independent random variables. The CDF of $X$ can be expressed as
\[
\p(X \le u)=\e [ {\mathbf 1}_{\{X/u \le 1\}}]=\e [ H(X/u)] \approx 
\e \!\left[ \sum_{j=1}^M c_j e^{-\lambda_j X /u} \right]
=\sum_{j=1}^M c_j F_X(\lambda_j/u), \qquad u\ge 0. 
\]
We leave the investigation of the efficiency of this method, as well as its comparison with other Laplace transform inversion techniques, to future work.

\section{Conclusion and future work}\label{Section_conclusion}

In this paper we introduced an algorithm for finding approximations to real-valued functions on $\r^+$ by exponential sums. The algorithm relies on the continued fraction approach to find a rational approximation to the Laplace transform of $f$, and the coefficients of the exponential sum are then recovered from this rational approximation via partial fraction decomposition. While we are not able to offer any rigorous proofs concerning the convergence or accuracy of this method, the numerous numerical experiments presented in this paper suggest that this algorithm performs well and deserves further study. 

We conclude by outlining several possible extensions and improvements of the algorithm. One immediate extension is that we could relax the condition that  $f \in L_1((0,\infty), \d x)$ and, instead, require only that the Laplace transform $F$ (defined in \eqref{def:F_Laplace_transform}) exists in some half-plane $\re(z)\ge c$. By placing all interpolation points $z_j$ in this half-plane, our algorithm would work without any modification. Equivalently, we could apply our algorithm (as described in this paper) to the function $\tilde f(x)=f(x) e^{-c x}$ and then multiply the resulting exponential sum $\phi$ by $e^{c x}$ to obtain an exponential sum approximation to $f$. 

As an illustration, we apply our algorithm to the function $f(x)=1/(1+x)$. The Laplace transform exists for all $\re(z)>0$ and when $p$ is even all $z_j$ lie in the half-plane $\re(z)>0$. We fixed $n_{\infty}=2$ and for each $M\in \{4,6,8\}$ we searched for $A$ and $B$ that would produce accurate exponential sum approximations to $f(x)=1/(1+x)$ in the sense of the $L_{\infty}([0,1],\d x)$ norm. The results of this experiment are shown in Figure \ref{fig12}. While the results are not as good as those obtained in \cite{Hackbusch_2019} (where the best $L_{\infty}$ approximations were found), 
they still produce good approximations and suggest exponential decay of error with increasing number of terms in the exponential sum $\phi$. Interestingly, the original function $f(x)=1/(1+x)$ is completely monotone and the approximations found by our algorithm seem to stay in the same class of completely monotone functions: the coefficients $c_j$ and $\lambda_j$ produced by our algorithm are all positive . 

\begin{figure}[t]
\centering
\subfloat[][$M=4$, $A=8$, $B=5.5$]{\label{fig_12a}\includegraphics[height =5.5cm]{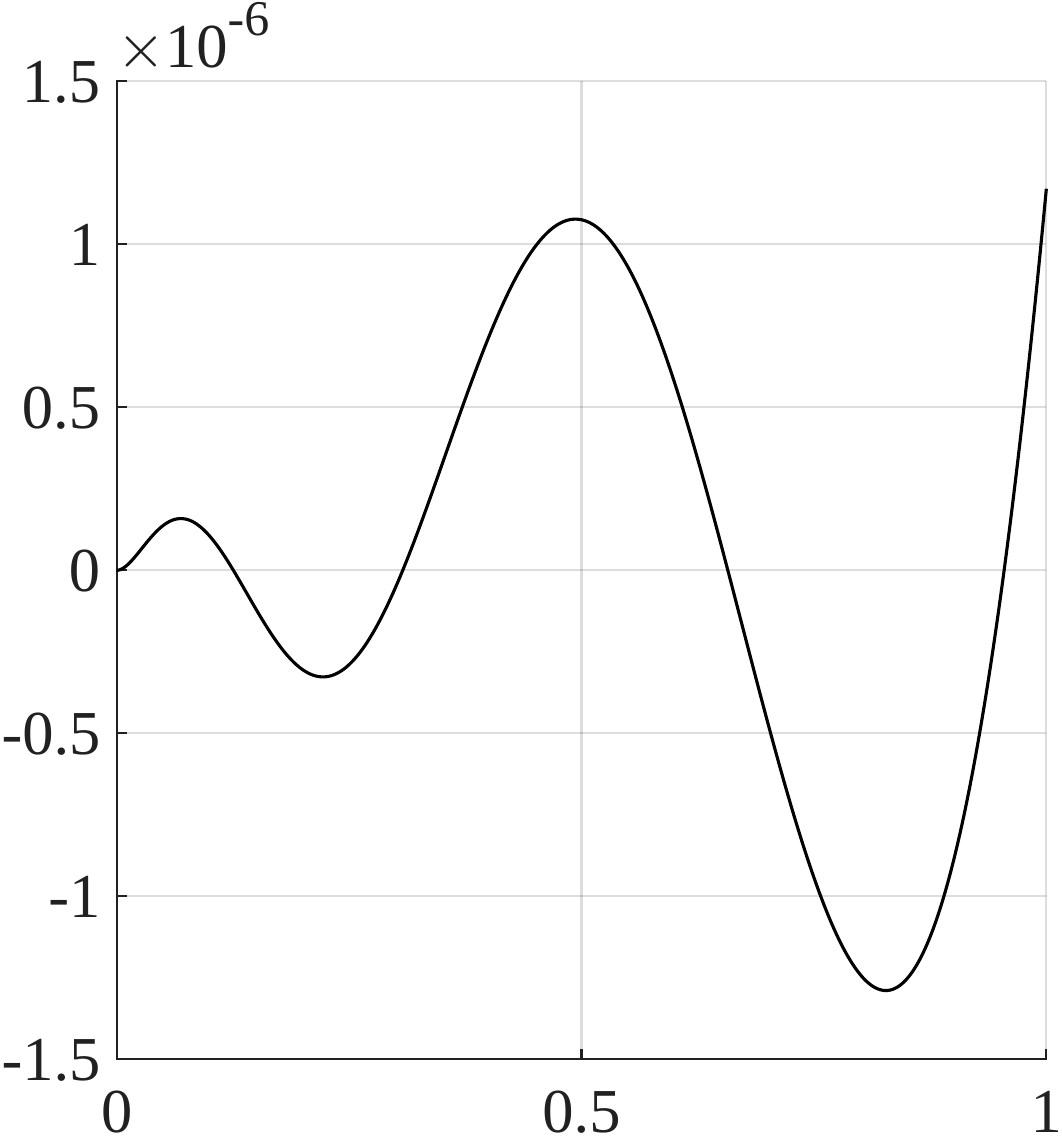}} 
\hspace{0.15cm}
\subfloat[][$M=6$, $A=11.5$, $B=8.5$]{\label{fig_12b}\includegraphics[height =5.5cm]{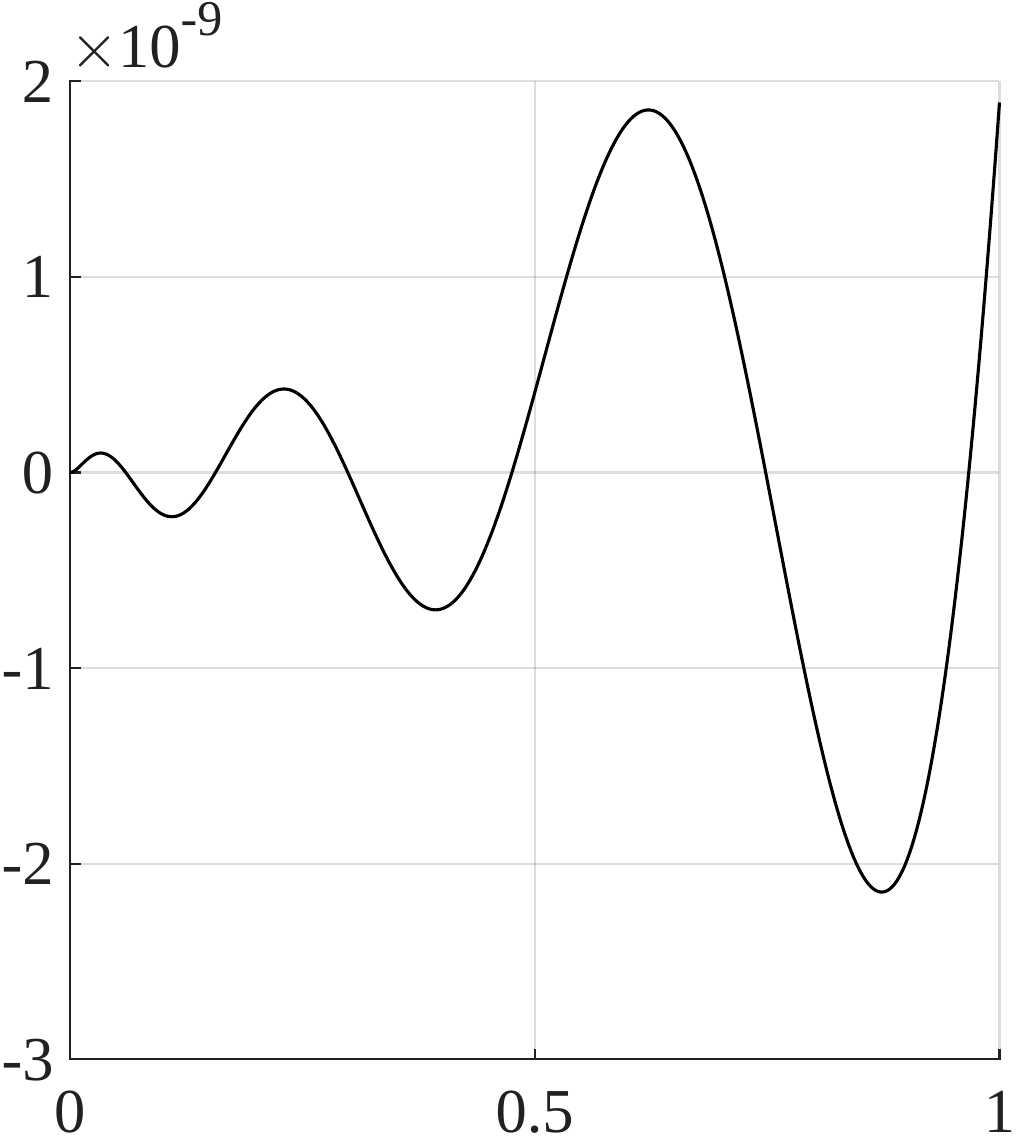}}    
\hspace{0.15cm}
\subfloat[][$M=8$, $A=14.5$, $B=11.5$]{\label{fig_12c}\includegraphics[height =5.5cm]{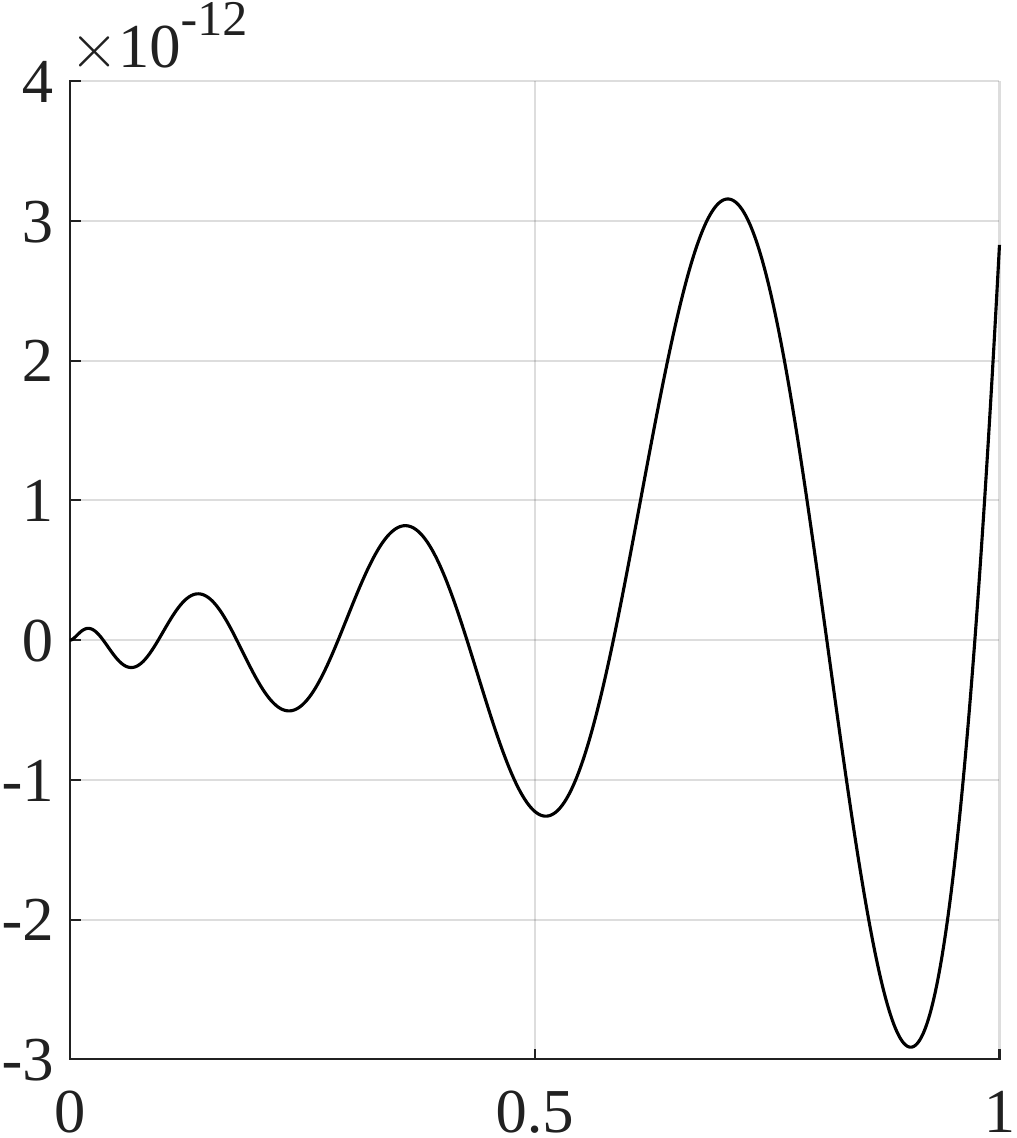}} 
\caption{The graphs of $(1+x)^{-1}-\phi(x)$ for different values of $M$, $A$ and $B$ ($n_{\infty}=2$ in all cases).} 
\label{fig12}
\end{figure}

Another direction that needs to be explored is the effect of the choice of the interpolation points $z_j$ on the resulting exponential sum approximation $\phi$. In this paper we chose to position $z_j$ (in an equidistant and symmetrical fashion) on two rays from the origin. The reasons for this choice are that it is simple (requires only two parameters $A$ and $B$) and seems to produce good exponential sum approximations $\phi$. However, one could place $z_j$ on different curves: other natural choices would be to position $z_j$ on parabolas $x=C y^2$ or on more general curves $x= C y^{\alpha}$. It could also be the case that positioning $z_j$ in an equidistant manner is not optimal. Perhaps  positioning the points $z_j$ more densely near the origin would produce better results. It remains to be seen what effect these modifications would have on the accuracy of the resulting exponential sum approximations. 


\section*{Data availability}

The code and computation results are available at \href{https://github.com/Alexey-Kuznetsov-math/exponential_sum_approximations}{github} or at authors website \url{https://kuznetsovmath.ca/}.

\section*{Acknowledgements}
The research was supported by the Natural Sciences and Engineering Research Council of Canada. We
are grateful to anonymous referees for carefully reading the paper and for providing very helpful comments.


\end{document}